\newcommand{\Ocyt}{\Omega_{\mathrm{cyt}}}
\newcommand{\Onuc}{\Omega_{\mathrm{nuc}}}
\newcommand{\ract}{r_{\mathrm{act}}}
\newcommand{\rimp}{r_{\mathrm{imp}}}
\newcommand{\rimpp}{r_{\mathrm{imp2}}}
\newcommand{\rexp}{r_{\mathrm{exp}}}
\newcommand{\rdelay}{r_{\mathrm{delay}}}
\newcommand{\ukhong}{u_{0,\infty}}
\newcommand{\umot}{u_{1,\infty}}
\newcommand{\uhai}{u_{2,\infty}}
\newcommand{\uba}{u_{3,\infty}}
\newcommand{\ubon}{u_{4,\infty}}
\newcommand{\unam}{u_{5,\infty}}
\newcommand{\usau}{u_{6,\infty}}
\newcommand{\ubay}{u_{7,\infty}}
\newcommand{\uu}{\mathbf{u}}
\newcommand{\vv}{\mathbf{w}}
\newcommand{\uinf}{\mathbf{u}_{\infty}}
\newcommand{\E}{\mathcal{E}}
\newcommand{\D}{\mathcal{D}}
\newcommand{\intO}{\int_{\Omega}}
\newcommand{\intG}{\int_{\Gamma}}
\newcommand{\intGG}{\int_{\Gamma_2}}
\newcommand{\Li}{L_{\infty}}
\newcommand{\li}{\ell_{\infty}}
\renewcommand{\Pi}{P_{\infty}}
\renewcommand{\pi}{p_{\infty}}
\newcommand{\oU}{\overline{U}}
\newcommand{\oV}{\overline{V}}
\newcommand{\ou}{\overline{u}}
\newcommand{\ov}{\overline{v}}
\newcommand{\cc}{\boldsymbol{c}}
\begin{document}
\title[Volume-surface reaction-diffusion systems arising from cell biology]
{Entropy methods and convergence to equilibrium for volume-surface reaction-diffusion systems}

\author[K. Fellner, B.Q. Tang]{Klemens Fellner, Bao Quoc Tang}

\address{Klemens Fellner \hfill\break
Institute of Mathematics and Scientific Computing, University of Graz, Heinrichstrasse 36, 8010 Graz, Austria}
\email{klemens.fellner@uni-graz.at}

\address{Bao Quoc Tang \hfill\break
Institute of Mathematics and Scientific Computing, University of Graz, Heinrichstrasse 36, 8010 Graz, Austria}
\email{quoc.tang@uni-graz.at} 

\subjclass[2010]{35B40, 35K57}
\keywords{Reaction-Diffusion Equations, Volume-Surface Coupling, Surface diffusion, Asymmetric Stem Cell Division, JAK2/STAT5 signalling pathway, Convergence to Equilibrium, Entropy Method}

\begin{abstract}
We consider two volume-surface reaction-diffusion systems arising from cell biology. The first system describes the localisation of the protein Lgl in the asymmetric division of Drosophila SOP stem cells, while the second system models the JAK2/STAT5 signalling pathway. Both model systems have in common that i) different species are located in \emph{different spatial compartments}, ii) the involved chemical reaction kinetics between the species satisfies a \emph{complex balance condition} and iii) that the associated complex balance equilibrium is \emph{spatially inhomogeneous}. 
	By using recent advances on the entropy method for complex balanced reaction-diffusion systems, we show for both systems exponential convergence to the equilibrium with constants and rates, which can be explicitly estimated.
\end{abstract}

\maketitle

\numberwithin{equation}{section}
\newtheorem{theorem}{Theorem}[section]
\newtheorem{lemma}[theorem]{Lemma}
\newtheorem{proposition}[theorem]{Proposition}
\newtheorem{definition}{Definition}[section]
\newtheorem{remark}{Remark}[section]
\newtheorem{corollary}{Corollary}[section]

\section{Introduction and main results}

%
%

Various physical, chemical, biological or ecological systems involve processes in different spatial compartments.
A particular important example is given by considering quantities on a domain and on its surrounding boundary. Reactions taking place in these situations result in a class of PDE models called \emph{Volume-Surface Reaction-Diffusion} systems (hereafter, we will use the abbreviation {VSRD systems}). The intrinsic volume-surface coupling of VSRD systems introduces new difficulties in both the mathematical and the numerical analysis compared to classical reaction-diffusion systems supported on only one spatial domain. 

Recently, a rapidly increasing amount of attention has been devoted to the mathematical theory of VSRD systems arising from such different applications as fluid mechanics \cite{GM13, Mie13}, ecology \cite{BCRR15,BCR13,BRR13a}, crystal growth \cite{KD01} or, especially, cell biology, see e.g. \cite{FLT,FRT16,FNR13,HT16}. 
\smallskip

This paper aims to investigate the large time behaviour of two particular linear VSRD systems arising from two different application backgrounds in cell biology: the first being a model for the localisation of the key-protein Lgl during the asymmetric stem cell division of SOP precursor cells in Drosophila (see e.g. \cite{FRT16}), the second one being a model on the so-called JAK2/STAT5 signalling pathway, see \cite{FNR13}.
In both examples, the cell cytoplasm constitutes the volume domain and the surrounding cell-membrane/cortex constitutes the surrounding boundary. In the JAK2/STAT5 model, also the volume of cell nucleus and its boundary are considered. 
Moreover, reactions occur within and between the volume- and surface-compartments, which do not satisfy the so called detailed balance condition, but the more general  
\emph{complex balance condition}, see e.g. \cite{HJ72}. 

The main results of this paper prove exponential convergence to the complex balance equilibrium with explicitly computable rates by using the so-called {\it entropy method}. Moreover, we extend the entropy method to apply to spatially inhomogeneous equilibria. 
\textcolor{black}{This paper may serve as a proof of concept for the applicability of the entropy method to a wide class of VSRD systems including mixed ODE/PDE systems. The presented proofs, however, rely on positive lower and upper bounds of the equilibria, which are difficult to obtain for general systems.}
\smallskip

The entropy method applied in this work has recently become a very powerful tool in proving exponential convergence to equilibrium with explicit rates for reaction-diffusion systems, but mostly under the assumption of the detailed balance condition and on a single spatial domain, see e.g. \cite{DF06, DF08,FT,GGH96, MHM15}. 
The entropy method for complex balance reaction-diffusion networks was so far only considered on a single domain for linear, respectively, nonlinear systems in \cite{FPT,DFT}. 

The current paper constitutes the first results of the entropy method for volume-surface systems with spatially inhomogeneous complex balance equilibria.   
In the next two sections, we detail the considered VSRD systems and state the main results.


\subsection*{A VSRD model for the localisation of Lgl during asymmetric stem cell division}
In stem cells undergoing asymmetric cell division, particular proteins (so-called cell-fate determinants) are localised at the cortex of only one of the two daughter cells during mitosis. These cell-fate determinants trigger in the following the differentiation of one daughter cell into specific tissue while the other daughter cell remains a stem cell. 
         
In Drosophila, SOP precursor stem cells provide a well-studied biological example model of asymmetric stem cell division, see e.g. \cite{BMK,MEBWK,WNK} and the references therein. 
In particular, asymmetric cell division of SOP cells is driven by the asymmetric localisation of the key protein Lgl (Lethal giant larvae), which exists in two conformational states: a non-phosphorylated form which regulates the localisation of the cell-fate-determinants in the membrane of one daughter cell, and a phosphorylated form which is inactive.

The asymmetric localisation of Lgl during mitosis is the result of the activation of the kinase aPKC, which phosphorylates Lgl (as part of a highly evolutionary conserved protein complex) only on a subpart of the cortex, as well as the results of the weakly reversible reaction/sorption dynamics of the two conformations of Lgl between cortex and cytoplasm. In particular, it is the  irreversible release of phosphorylated Lgl from the cortex, which initiates the asymmetric localisation of Lgl upon the activation of aPKC.
\smallskip

Let $\Omega \subset \mathbb R^n$, $n =2,3$ describe the cell cytoplasm as a connected, bounded domain with sufficiently smooth boundary $\partial\Omega$ (e.g. $\partial\Omega \in C^{2+\epsilon}$, $\epsilon>0$). Denote by $\Gamma = \partial\Omega$ the surrounding cell cortex. Moreover, the cell cortex is divided into two disjoint, connected subsets $\Gamma_1$ and $\Gamma_2$ with $\Gamma = \Gamma_1 \cup \Gamma_2$ and where $\Gamma_2$ is the active part of the cell cortex, where phosphorylation takes place. 

Concerning the various conformations of Lgl, we denote by $L$ and $P$ the concentrations of non-phosphorylated Lgl and phosphorylated Lgl within the volume domain $\Omega$. Moreover, the concentrations of non-phosphorylated Lgl and phosphorylated Lgl on the cell cortex are denoted by $\ell$ and $p$, respectively. Note that $\ell$ is supported on $\Gamma$ while $p$ is supported only on the sub-domain $\Gamma_2$, since phosphorylation only occurs on $\Gamma_2$. 

Schematically, we consider the following reactions between the four different conformations of Lgl with positive reaction rate constants $\alpha, \beta, \lambda, \gamma, \sigma$ and $\xi$. 
\begin{figure}[htp]
\scalebox{1}[1]{
\begin{tikzpicture}
\node (a) {$L$} node (b) at (1.7,0) {$P$} node (c) at (0,-1.7) {$\ell$} node (d) at (1.7,-1.7)  {$p$};
\draw[arrows=->] ([xshift =0.5mm,yshift=0.5mm]a.east) -- node [below] {\scalebox{.8}[.8]{$\alpha$}} ([xshift =-0.5mm,yshift=0.5mm]b.west);
\draw[arrows=->] ([xshift =-0.5mm,yshift=-0.5mm]b.west) -- node [above] {\scalebox{.8}[.8]{$\beta$}} ([xshift =0.5mm,yshift=-0.5mm]a.east);
\draw[arrows=->] ([xshift =0.5mm,yshift=0.5mm]c.north) -- node [right] {\scalebox{.8}[.8]{$\gamma$}} ([xshift =0.5mm,yshift=-0.5mm]a.south);
\draw[arrows=->] ([xshift =-0.5mm,yshift=-0.5mm]a.south) -- node [left] {\scalebox{.8}[.8]{$\lambda$}} ([xshift =-0.5mm,yshift=0.5mm]c.north);
\draw[arrows=->] ([xshift =0.5mm]c.east) -- node [below] {\scalebox{.8}[.8]{$\sigma$}} ([xshift=-0.5mm]d.west);
\draw[arrows=->] ([yshift =0.5mm]d.north) -- node [right] {\scalebox{.8}[.8]{$\xi$}} ([yshift=-0.5mm]b.south);
\end{tikzpicture}\hspace{2.5cm}} 
\caption{The reaction dynamics between $L, P, l$ and $p$}\label{LGLmodel}
\end{figure}
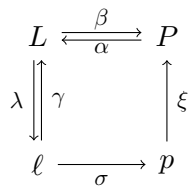

Moreover, we assume positive diffusion coefficients of $L, P$ on $\Omega$, of $\ell$ on $\Gamma$ and of $p$ on $\Gamma_2$, i.e. $d_L, d_P, d_{\ell}, d_p>0$, respectively. Then, applying the mass action law, the resulted VSRD system consists of two volume equations
\begin{subequations}\label{Lgl_system}
	\begin{equation}
		\begin{cases}
			L_t - d_L\Delta L = -\beta L + \alpha P, &x\in\Omega,\; t>0,\\
			P_t - d_P\Delta P = \beta L - \alpha P, &x\in\Omega,\; t>0,
		\end{cases}
	\end{equation}
	\text{and two surface equations}
	\begin{equation}
		\begin{cases}
			\ell_t - d_{\ell}\Delta_{\Gamma}\ell = \lambda L - (\gamma + \sigma\chi_{\Gamma_2})\ell, &x\in\Gamma, \; t>0,\\
			p_t - d_p\Delta_{\Gamma_2}p = \sigma\ell - \xi p, &x\in\Gamma_2, \; t>0,\\
			\quad \partial_{\nu_2}p = 0, &x\in\partial\Gamma_2, \; t>0,\\
		\end{cases}
	\end{equation}
	\text{which are connected via mixed Neumann/Robin boundary conditions}
	\begin{equation}
		\begin{cases}
			\quad d_L\partial_{\nu}L = -\lambda L + \gamma \ell, &x\in\Gamma,\; t>0,\\
			\quad d_P\partial_{\nu}P = \chi_{\Gamma_2}\xi p, &x\in\Gamma,\; t>0,
		\end{cases}
	\end{equation}
	\text{and subject to nonnegative initial data}
	\begin{equation}
		\begin{cases}
			L(x,0) = L_0(x),\quad P(x,0) = P_0(x),\qquad x\in\Omega,\\
			\ell(x,0) = \ell_0(x), \quad x\in \Gamma,\qquad	p(x,0) = p_0(x), \quad x\in\Gamma_2,
		\end{cases}
	\end{equation}
\end{subequations}
where $\nu$ and $\nu_2$ are the outward unit normal vectors of $\Gamma$ and $\partial\Gamma_2$, respectively, and  $\Delta_{\Gamma}$ and $\Delta_{\Gamma_2}$ are Laplace-Beltrami operators on $\Gamma$ and $\Gamma_2$, respectively. Moreover, $\chi_{\Gamma_2}$ denotes the characteristic function of the boundary part $\Gamma_2$.

Note that the above system \eqref{Lgl_system} conserves the total mass of Lgl, which is expressed in the following conservation law:
\begin{multline}
         \int_{\Omega} (L(t,x)+P(t,x)) \, dx + \int_{\Gamma} \ell(t,x)\,dS + \int_{\Gamma_2} p(t,x)\,dS \\
         = \int_{\Omega} (L_0(x)+P_0(x)) \, dx + \int_{\Gamma} \ell_0(x)\,dS + \int_{\Gamma_2} p_0(x)\,dS>0,\qquad \forall t>0 \label{cons}
         \end{multline}

 \medskip

Concerning the existence of global weak solutions of \eqref{Lgl_system}, we refer to \cite{FRT16}, where also 
the quasi-steady-state approximation in the limit $\xi \rightarrow +\infty$ and numerical simulations were carried out. 

The first main result of this paper is stated in the following theorem.

\begin{theorem}[Exponential convergence to equilibrium of Lgl system \eqref{Lgl_system}]\label{mainresult1}\hfill\\
	Assume that $\Omega\subset \mathbb R^n$, $n=2,3$ is a bounded domain with sufficiently smooth boundary $\Gamma = \partial\Omega$. Moreover, $\Gamma = \Gamma_1 \cup \Gamma_2$ is the union of two disjoint, connected subsets and $\Gamma_2$ has a smooth boundary $\partial\Gamma_2$.
	
	Then, for any positive initial mass $M>0$, the system \eqref{Lgl_system} possesses a unique positive equilibrium $(\Li, \Pi, \li, \pi)$ satisfying the mass conservation law
	\begin{equation*}
		\int_{\Omega}(\Li(x) + \Pi(x))dx + \int_{\Gamma}\li(x)dS + \int_{\Gamma_2}\pi(x)dS = M.
	\end{equation*}
Moreover, $\Li \in C(\overline{\Omega})\cap H^2(\Omega)$, $\Pi \in L^{\infty}({\Omega})\cap H^{3/2}(\Omega)$, $\li\in C({\Gamma})\cap H^2(\Gamma_2)$ and $\pi\in C(\overline{\Gamma_2})\cap H^2(\Gamma_2)$, and there are $0<a\leq A <+\infty$ such that
\begin{equation*}
	a\leq \Li(x), \Pi(x), \li(x), \pi(x) \leq A
\end{equation*}
where the bounds hold for $x$ in $\overline{\Omega}$, a.e. in $\Omega$, in $\Gamma$ and in $\overline{\Gamma_2}$ respectively.
	
	Finally, every global weak solution to \eqref{Lgl_system}  with positive initial mass $M$ (as constructed in \cite{FRT16}) converges exponentially to $(\Li, \Pi, \li, \pi)$ in the following sense
	\begin{equation*}
		\int_{\Omega}\frac{|L(t) - \Li|^2}{\Li} + \int_{\Omega}\frac{|P(t) - \Pi|^2}{\Pi} + \int_{\Gamma}\frac{|\ell(t) - \li|^2}{\li} + \int_{\Gamma_2}\frac{|p(t) - \pi|^2}{\pi} \leq C_0e^{-\lambda_0 t}
	\end{equation*}
	for all $t>0$, where $\lambda_0$ is as in Lemma \ref{EEDE_Lgl} and $C_0$ and $\lambda_0$ can be computed explicitly. 
\end{theorem}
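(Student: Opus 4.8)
The plan is to prove the three assertions — existence and uniqueness of the equilibrium, its regularity and strictly positive bounds, and finally exponential convergence — in that order, since the entropy method delivers the convergence only once the equilibrium is fully understood.

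Step 1 (equilibrium). Because \eqref{Lgl_system} is linear with no zero-order source terms, the cone of nonnegative stationary states is invariant under multiplication by positive scalars; it therefore suffices to produce one nontrivial positive steady state and to show that the steady-state space is one-dimensional, after which the constraint $\int_\Omega(\Li+\Pi)+\int_\Gamma\li+\int_{\Gamma_2}\pi=M$ selects a unique representative. I would obtain such a state either variationally — writing the stationary weak formulation on $H^1(\Omega)^2\times H^1(\Gamma)\times H^1(\Gamma_2)$, using the boundary conditions to couple the volume and surface bilinear forms, and applying Lax–Milgram on the mass-zero subspace where the reaction–diffusion form is coercive — or, more structurally, by a Krein–Rutman/Perron–Frobenius argument for the positivity-preserving semigroup generated by \eqref{Lgl_system}. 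Irreducibility of the monomolecular reaction–diffusion coupling (the reaction graph $L\leftrightarrow P$, $L\leftrightarrow\ell$, $\ell\to p\to P$ is strongly connected) forces a simple principal eigenvalue with strictly positive eigenfunction, and the conservation law \eqref{cons} pins this eigenvalue at $0$; simplicity yields uniqueness up to the scaling already noted.

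Step 2 (regularity and bounds). Starting from the $H^1$ steady state, I would bootstrap by elliptic regularity: the volume equations are elliptic with Robin/Neumann data built from the surface unknowns, and the surface equations are elliptic on $\Gamma$ and $\Gamma_2$. The reduced regularity of $\Pi$ and the merely $C(\Gamma)\cap H^2(\Gamma_2)$ regularity of $\li$ reflect the jump of $\chi_{\Gamma_2}$ across $\partial\Gamma_2$, which enters both the flux $\chi_{\Gamma_2}\xi\,\pi$ and the reaction $-\sigma\chi_{\Gamma_2}\li$; tracking this loss yields exactly the stated $H^{3/2}$, resp. piecewise-$H^2$, classes. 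The strictly positive lower bound $a>0$ is the crucial quantitative input for the entropy step: it follows from the strong maximum principle and Hopf's lemma (equivalently from strict positivity of the Krein–Rutman eigenfunction) together with continuity, which upgrades positivity to a uniform lower bound on the compact closures, while $A$ comes from the $L^\infty/C$ regularity just established.

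Step 3 (entropy method). Since the network is monomolecular — weakly reversible, hence complex balanced — the quadratic relative free energy
\[
\mathcal{E}[\uu\,|\,\uinf]=\int_\Omega\frac{(L-\Li)^2}{\Li}+\int_\Omega\frac{(P-\Pi)^2}{\Pi}+\int_\Gamma\frac{(\ell-\li)^2}{\li}+\int_{\Gamma_2}\frac{(p-\pi)^2}{\pi},
\]
which is precisely the functional in the statement and is well defined thanks to $a>0$, is a Lyapunov functional. I would differentiate $\mathcal{E}$ along the flow, integrate by parts using the boundary conditions, and organise the dissipation $\mathcal{D}=-\tfrac{d}{dt}\mathcal{E}$ into nonnegative diffusive (Fisher-information-type) contributions on $\Omega,\Gamma,\Gamma_2$, reaction contributions, and volume–surface coupling contributions on $\Gamma$. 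Granting the entropy–entropy-dissipation inequality $\mathcal{D}\ge\lambda_0\mathcal{E}$ of Lemma \ref{EEDE_Lgl}, Gronwall's inequality gives $\mathcal{E}(t)\le\mathcal{E}(\uu_0)e^{-\lambda_0 t}$, which is the claimed estimate with $C_0=\mathcal{E}(\uu_0)$ and the same explicit rate $\lambda_0$.

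The main obstacle is the EED inequality in the presence of a spatially inhomogeneous equilibrium. Because $\Li,\Pi,\li,\pi$ are nonconstant, the weights $1/\Li,\dots$ vary in space and the diffusive dissipation is not a clean Fisher information: integration by parts produces cross terms involving $\nabla\Li$ etc., which must be absorbed using the stationary equations satisfied by the equilibrium and the uniform bounds $a\le\cdot\le A$. Once these are controlled, the strategy is the familiar two-scale one: the diffusion terms bound, via Poincaré/Poincaré–Wirtinger inequalities on $\Omega$, $\Gamma$ and $\Gamma_2$ together with trace inequalities for the coupling, the deviation of each species from its spatial average, reducing the problem to a finite-dimensional inequality for the averaged quantities; the reaction and coupling terms then control this averaged, ODE-type deviation through the spectral gap of the associated complex balanced network, after using \eqref{cons} to eliminate the neutral mass mode. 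Assembling these into a single inequality with an explicit, space-uniform $\lambda_0$, while correctly handling the boundary integrals and the $\chi_{\Gamma_2}$ discontinuity, is the delicate part, and this is where the recent complex balanced entropy-method advances of \cite{FPT,DFT}, adapted to the volume–surface setting, are essential.
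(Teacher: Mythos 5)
Your Step 3 is essentially the paper's proof: the same quadratic relative entropy, the same two-scale decomposition (weighted Poincar\'e and trace inequalities controlling the deviations from weighted averages, plus a finite-dimensional spectral-gap inequality for the averaged quantities under the mass constraint), and the same Gronwall conclusion. One reassurance: the cross terms involving $\nabla \Li$ that you anticipate do not in fact survive --- because $(\Li,\Pi,\li,\pi)$ solves the stationary problem, the dissipation of $\E(\cc|\cc_\infty)$ comes out exactly as the clean weighted Fisher-information plus reaction terms of \eqref{EntropyDissipationR}. The real delicacy sits elsewhere: the reaction term $\xi\intGG|V-v|^2\,d\pi$ cannot be expanded like the others because $\|\pi/\Pi\|_{L^\infty(\Gamma_2)}$ is not controlled ($\Pi$ has no continuous trace on $\Gamma_2$), and the paper simply discards this term and recovers the comparison between $\oV$ and $\ov$ by routing it through the chain $P\to L\to \ell\to p$ via the triangle inequality, using weak reversibility. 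Your closing remark about ``correctly handling the $\chi_{\Gamma_2}$ discontinuity'' gestures at this but does not resolve it.

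For Steps 1--2 you take a genuinely different route. The paper constructs the equilibrium by a Schauder fixed-point argument applied to a decoupled auxiliary system and proves uniqueness by showing that the entropy dissipation between two equilibria vanishes; your Krein--Rutman/one-dimensional-kernel argument is a legitimate alternative for a linear positivity-preserving system, though the Lax--Milgram variant as stated is not quite right: you are solving a homogeneous problem with a nontrivial kernel, so coercivity on the mass-zero subspace gives uniqueness modulo the kernel but not the existence of a positive kernel element --- that is exactly what Krein--Rutman (or the paper's fixed point plus maximum principles) must supply. The genuine gap is in your positivity and boundedness argument for $\Pi$: you invoke the strong maximum principle and Hopf's lemma, but $\Pi$ is only in $H^{3/2}(\Omega)$ because its Neumann datum $\chi_{\Gamma_2}\xi\pi$ is discontinuous, so it is not a classical solution and neither tool applies to it directly; the same obstruction blocks the upper bound, since $H^{3/2}(\Omega)$ does not embed into $L^\infty(\Omega)$ for $n=3$. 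The paper's fix is to sandwich $\Pi$ between explicit auxiliary functions --- a supersolution $\hat P$ solving $-d_P\Delta \hat P+\alpha\hat P=\beta\|\Li\|_\infty$ with constant Neumann datum $\xi\|\pi\|_\infty$, and a subsolution $P^*$ with homogeneous Neumann datum --- each of which is a classical $H^2$ solution to which the strong maximum principle does apply, the comparison with $\Pi$ being carried out by a weak maximum principle (testing with negative parts). Your outline needs this, or an equivalent device, to close.
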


\subsection*{A PDE/ODE system modelling the JAK2/STAT5 signalling pathway}

The communication between cells in multicellular organisms is often mediated by signalling molecules secreted to the extracellular space, which then bind to cell surface receptors, see \cite{FNR13}. 
However, the modalities of the transport from the site of signal transducer and activator of transcription (STAT) phosphorylation at the plasma membrane to the site of action in the nucleus is still unclear. In \cite{FNR13}, Friedmann, Neumann and Rannacher  introduced a mixed PDE/ODE model system to analyse the influence of the cell shape on the regulatory response to the activated pathway.

By following the notations in \cite{FNR13}, we denote by $u_0$ and $u_1$ the unphosporylated and phosphorylated STAT5 in the cytoplasm, while $u_2$ and $u_3$ denote the unphosphorylated and phosphorylated STAT5 in the nucleus, respectively. 
Moreover, we denote by $u_4, \ldots, u_7$ so-called "fictitious concentrations", which \textcolor{black}{describe processes in the nucleus via linear equations yielding a delayed response}. The reaction dynamics of the eight species $u_i$, $i=0,1,\ldots, 7$ are depicted by the diagram in Fig. \ref{JSFig}.
\begin{figure}[htp!]
  \scalebox{1}[1]{
    \begin{tikzpicture}
      \node (a) {$u_1$} node (b) at (1.7,0) {$u_3$} node (c) at (3.4,0) {$u_4$} node (d) at (5.1,0)  {$u_5$} node (e) at (5.1,1.7)  {$u_6$} node (f) at (3.4,1.7)  {$u_7$} node (g) at (1.7,1.7)  {$u_2$} node (h) at (0,1.7)  {$u_0$} ;
      \draw[arrows=->] ([xshift =0.5mm]a.east) -- node [below] {\scalebox{.8}[.8]{$r_{imp2}$}} ([xshift=-0.5mm]b.west);
      \draw[arrows=->] ([xshift =0.5mm]b.east) -- node [below] {\scalebox{.8}[.8]{$r_{delay}$}} ([xshift=-0.5mm]c.west);
      \draw[arrows=->] ([xshift =0.5mm]c.east) -- node [below] {\scalebox{.8}[.8]{$r_{delay}$}} ([xshift=-0.5mm]d.west);
      \draw[arrows=->] ([yshift =0.5mm]d.north) -- node [right] {\scalebox{.8}[.8]{$r_{delay}$}} ([yshift=-0.5mm]e.south);
      \draw[arrows=->] ([xshift =-0.5mm]e.west) -- node [above] {\scalebox{.8}[.8]{$r_{delay}$}} ([xshift=0.5mm]f.east);
      \draw[arrows=->] ([xshift =-0.5mm]f.west) -- node [above] {\scalebox{.8}[.8]{$r_{delay}$}} ([xshift=0.5mm]g.east);
      \draw[arrows=->] ([yshift =-0.5mm]h.south) -- node [left] {\scalebox{.8}[.8]{$r_{act}p_{JAK}$}} ([yshift=0.5mm]a.north);
      \draw[arrows=->] ([xshift =-0.5mm, yshift=0.5mm]g.west) -- node [above] {\scalebox{.8}[.8]{$r_{exp}$}} ([xshift=0.5mm, yshift=0.5mm]h.east);
      \draw[arrows=->] ([xshift =0.5mm, yshift=-0.5mm]h.east) -- node [below] {\scalebox{.8}[.8]{$r_{imp}$}} ([xshift=-0.5mm, yshift=-0.5mm]g.west);
    \end{tikzpicture}\qquad} 
   \caption{Reaction network of the JAK2/STAT5 signalling pathway}\label{JSFig}
\end{figure}
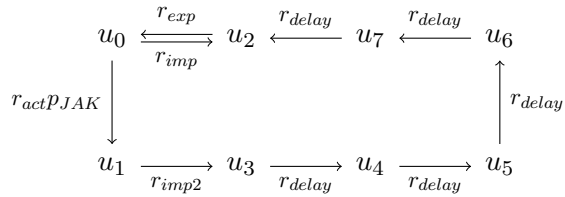


The JAK2/STAT5 model considers a smooth, bounded domain $\Omega_0\subset \mathbb R^n$ and distinguishes $\Onuc \subsetneq \Omega_0$ the domain of the cell nucleus and $\Ocyt = \Omega_0\backslash \Onuc$ the cell cytoplasm. With a little abuse of notation, we denote by $\partial\Ocyt = \partial\Omega_0$ the membrane of the cell, while $\partial\Onuc$ is the boundary of the nucleus. 

The following mixed PDE/ODE model was considered  in \cite{FNR13}: The two PDEs
\begin{subequations}\label{JAK_system}
\begin{equation}\label{u0} 
  \begin{cases}
    \partial_tu_0(t,x) = D\Delta u_0(t,x), &t>0,\quad x\in\Ocyt,\\
    D\partial_{n_1}u_0(t,y) = -\frac{\ract}{|\partial\Ocyt|}p_{JAK}u_0(t,y), &t>0, \quad y\in\partial\Ocyt,\\
    D\partial_{n_2}u_0(t,y) = -\frac{\rimp}{|\partial\Onuc|}u_0(t,y) + \frac{\rexp}{|\partial\Onuc|}u_2(t), &t>0, \quad y\in\partial\Onuc,
  \end{cases}
\end{equation} 
\begin{equation}\label{u1} 
  \begin{cases}
    \partial_tu_1(t,x) = D\Delta u_1(t,x), &\qquad\quad\ \ t>0,\quad x\in\Ocyt,\\
    D\partial_{n_1}u_1(t,y) = \frac{\ract}{|\partial\Ocyt|}p_{JAK}u_0(t,y), &\qquad\quad\ \ t>0,\quad y\in\partial\Ocyt,\\
    D\partial_{n_2}u_1(t,y) = -\frac{\rimpp}{|\partial\Onuc|}u_1(t,y), &\qquad\quad\ \ t>0, \quad y\in\partial\Onuc,
  \end{cases}
\end{equation} 
\text{and six ODEs}
\begin{equation}\label{ODEs} 
  \begin{cases}
    (u_2)'(t) + \frac{\rexp}{|\Onuc|}u_2(t) = \frac{\rdelay}{|\Onuc|}u_7(t) + \frac{\rimp}{|\Onuc||\partial\Onuc|}\int\limits_{\partial\Onuc}u_0(t,y)dS,\\
    (u_3)'(t) + \frac{\rdelay}{|\Onuc|}u_3(t) = \frac{\rimpp}{|\Onuc||\partial\Onuc|}\int\limits_{\partial\Onuc}u_1(t,y)dS,\\
    (u_i)'(t) + \frac{\rdelay}{|\Onuc|}u_i(t) = \frac{\rdelay}{|\Onuc|}u_{i-1}(t),\quad i=4,5,6,7,
  \end{cases}
\end{equation} 
\end{subequations}
subject to nonnegative initial data $u_0(x,0) = u_0^{in}(x), u_1(x,0) = u_1^{in}(x), \; x\in\Omega$, $u_i(0) = u_i^{in}$ for $i=2,3,\ldots,7$, where $\nu_1$ and $\nu_2$ are outward normals of $\partial\Ocyt$ and $\partial\Onuc$ respectively.
Note that the system \eqref{JAK_system} satisfies the mass conservation law
\begin{equation}\label{Mass} 
  \int\limits_{\Ocyt}(u_0(t,x)+u_1(t,x))dx + |\Onuc|\sum_{i=2}^{7}u_{i}(t) = \int\limits_{\Ocyt}(u_0^{in}(x)+u_1^{in}(x))dx + |\Onuc|\sum_{i=2}^{7}u_{i}^{in}.
\end{equation}


The well-posedness of the mixed PDE/ODE model \eqref{JAK_system} was shown in \cite{FNR13}. Moreover, in the pure ODE case, the authors proved exponential convergence to equilibrium by extensively studying the structure of the reaction matrix. This approach, however, doesn't apply to the PDE/ODE case and the authors were only able to prove Lyapunov stability of the stationary states of \eqref{JAK_system}.
\medskip

The second main result of this paper proves exponential convergence to equilibrium of the mixed PDE/ODE JAK2/STAT5 model \eqref{JAK_system}. 
\begin{theorem}[Exponential convergence to equilibrium of the JAK2/STAT5 model \eqref{JAK_system}]\label{mainresult2}
Let $\Omega_0, \Onuc, \Ocyt \subset \mathbb R^n$, $n\ge 2$ be smooth, bounded domains 
with $\Onuc \subsetneq \Omega_0$, $\Ocyt = \Omega_0\backslash \Onuc$ and $\partial\Ocyt = \partial\Omega_0$.

Then, for any positive initial mass $M>0$, system \eqref{JAK_system} possesses a unique equilibrium $(\ukhong,\ldots, \ubay)$ satisfying the mass conservation law
	\begin{equation}\label{MassInf} 
	  \int\limits_{\Ocyt}(\ukhong(x)+\umot(x))dx + |\Onuc|\sum_{i=2}^{7}u_{i,\infty} = M>0.
	\end{equation}
	Moreover, $\uhai, \ldots, \ubay$ are positive and $\ukhong, \umot\in C(\overline{\Omega}_{\mathrm{cyt}})\cap C^2(\Ocyt)$ satisfy that
	$
		b\leq \ukhong(x), \umot(x) \leq B$ for all $x\in\overline{\Omega}_{\mathrm{cyt}}
	$
	for some constants $0<b\le B\le+\infty$.
	
	Finally, any global weak solution $(u_i)_{i=0,1,\ldots, 7}$ to \eqref{JAK_system} with positive initial mass $M$ (as constructed in \cite{FNR13}) converges exponentially to the equilibrium $(u_{i,\infty})_{i=0,1,\ldots, 7}$  in the sense that
	\begin{multline*}
		\int\limits_{\Ocyt}\left(\frac{|u_0(t,x)-\ukhong(x)|^2}{\ukhong(x)} + \frac{|u_1(t,x)-\umot(x)|^2}{\umot(x)}\right)dx \\+ |\Onuc|\sum_{i=2}^{7}\frac{|u_i(t) - u_{i,\infty}|^2}{u_{i,\infty}} \leq C_1e^{-\lambda_1 t},
	\end{multline*}
	for all $t> 0$, where $\lambda_1$ is as in Lemma \ref{EEDE_JAK} and $C_1, \lambda_1$ are constants, which can be computed explicitly in terms of the domains, parameters and initial mass $M$.
\end{theorem}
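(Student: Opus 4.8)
\emph{Proof strategy.} The plan is to apply the entropy method with the quadratic relative entropy that already appears on the left-hand side of the convergence statement, namely
\begin{equation*}
\E[\uu(t)] := \int_{\Ocyt}\left(\frac{|u_0-\ukhong|^2}{\ukhong} + \frac{|u_1-\umot|^2}{\umot}\right)dx + |\Onuc|\sum_{i=2}^7\frac{|u_i-u_{i,\infty}|^2}{u_{i,\infty}},
\end{equation*}
and to establish an entropy--entropy-dissipation (EED) inequality $\D[\uu]\ge\lambda_1\,\E[\uu]$, where $\D[\uu]:=-\tfrac{d}{dt}\E[\uu(t)]$. This EED inequality is exactly the content of Lemma~\ref{EEDE_JAK}; once it is available, $\tfrac{d}{dt}\E[\uu(t)]\le-\lambda_1\E[\uu(t)]$ and Gronwall's lemma yield the claimed decay with $C_1=\E[\uu(0)]$. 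It therefore suffices to (i) construct the equilibrium with the stated regularity and bounds, and (ii) prove the EED inequality.

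\emph{Step 1: the equilibrium.} First I would construct $(\ukhong,\dots,\ubay)$ by solving the stationary problem: $\ukhong,\umot$ are harmonic on $\Ocyt$ and are coupled through the Robin-type conditions on $\partial\Ocyt$ and $\partial\Onuc$ to the algebraic relations obtained by setting the six ODE right-hand sides in \eqref{ODEs} to zero. The complex balance condition guarantees that, for each prescribed mass $M>0$, this linear elliptic/algebraic system has a unique solution satisfying \eqref{MassInf} with all components strictly positive. Observe that $\ukhong$ cannot be a positive constant, since the flux condition on $\partial\Ocyt$ would then force it to vanish; hence the equilibrium is genuinely spatially inhomogeneous. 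Elliptic regularity gives $\ukhong,\umot\in C(\overline{\Omega}_{\mathrm{cyt}})\cap C^2(\Ocyt)$, and the strong maximum principle together with Hopf's lemma provide the strictly positive lower bound $b$ and the finite upper bound $B$.

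\emph{Step 2: entropy dissipation.} Differentiating $\E$ along solutions, substituting \eqref{JAK_system} and integrating the diffusion terms by parts, the boundary integrals on $\partial\Ocyt$ and $\partial\Onuc$ recombine with the ODE contributions. Using that $\ukhong,\umot$ solve the stationary equations (in particular their harmonicity, $\Delta\ukhong=\Delta\umot=0$) to cancel the inhomogeneous terms, I expect the dissipation to split as $\D[\uu]=\D_{\mathrm{diff}}+\D_{\mathrm{react}}$ with a diffusive part
\begin{equation*}
\D_{\mathrm{diff}}=2D\int_{\Ocyt}\Bigl(\ukhong\,|\nabla(u_0/\ukhong)|^2 + \umot\,|\nabla(u_1/\umot)|^2\Bigr)\,dx\ge0,
\end{equation*}
and a reactive part $\D_{\mathrm{react}}\ge0$ which, by the complex balance structure, is a nonnegative quadratic form in the relative variables $u_i/u_{i,\infty}$ supported on the edges of the reaction graph of Fig.~\ref{JSFig}. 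The extraction of this clean form, together with the harmonicity-based cancellations, is the computation underlying Lemma~\ref{EEDE_JAK}.

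\emph{Step 3: the EED inequality (main obstacle) and conclusion.} The crux is the lower bound $\D[\uu]\ge\lambda_1\E[\uu]$, which I would prove in two stages. A weighted Poincar\'e inequality shows that $\D_{\mathrm{diff}}$ controls the deviation of $u_0/\ukhong$ and $u_1/\umot$ from their ($\ukhong$-, respectively $\umot$-weighted) spatial averages, thereby reducing the two PDE components to two scalar averages. It then remains to show that the finite-dimensional vector consisting of these averages together with $u_2,\dots,u_7$ relaxes at a positive rate, which follows from the spectral gap of the reaction matrix of the connected, complex balanced network restricted to the mass hyperplane \eqref{MassInf}. The hard part will be twofold: the trace terms $\int_{\partial\Onuc}u_0\,dS$ and $\int_{\partial\Onuc}u_1\,dS$ that link the volume to the nuclear ODEs must be controlled by trace inequalities and absorbed into $\D_{\mathrm{diff}}$ and $\D_{\mathrm{react}}$, and the spatial inhomogeneity of the equilibrium forces all averages and Poincar\'e constants to be taken with the nonconstant weights $\ukhong,\umot$, so that the bounds $b\le\ukhong,\umot\le B$ from Step~1 must be used to keep every constant explicit. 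With $\D[\uu]\ge\lambda_1\E[\uu]$ established, Gronwall's lemma closes the argument and gives the exponential decay with explicitly computable $C_1$ and $\lambda_1$.
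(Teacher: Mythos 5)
Your proposal is correct and follows essentially the same route as the paper: reduction of the six ODEs to algebraic relations so that only a non-local elliptic system for $\ukhong,\umot$ remains, maximum-principle arguments for the positive bounds, the quadratic relative entropy with weighted Poincar\'e and trace inequalities to reduce the entropy--entropy-dissipation inequality to a finite-dimensional spectral-gap estimate on the mass hyperplane (the paper's Lemma \ref{finite_dimension}), and Gronwall to conclude. The only place where the paper works harder than your sketch suggests is the existence part of Lemma \ref{STAT_equilibrium}, which it obtains via a Schauder fixed-point argument on an auxiliary non-local elliptic system (followed by bootstrap and maximum principles) rather than by a direct appeal to the complex balance condition.
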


\begin{remark}\label{diff_coefficients}
	In system \eqref{JAK_system}, the diffusion coefficients of $u_0$ and $u_1$ are taken the same as  in \cite{FNR13}. However, the proof of Theorem \ref{mainresult2} holds equally for different diffusion coefficients  for $u_0$ and $u_1$, e.g. $u_0$ diffuses with $D_0>0$ and $u_1$ with  $D_1>0$.  
\end{remark}

\section{Preliminaries}
The proofs of Theorems \ref{mainresult1} and \ref{mainresult2} use some previous results about 
linear complex balance reaction-diffusion networks proven in \cite{FPT}, which we shall briefly recall in the following.
The interested reader is referred to \cite{FPT} for more details. 

We consider a first order (i.e. linear) reaction network of the form
\begin{flushright}
\scalebox{1}[1]{
\begin{tikzpicture}
\node (a) {$S_{i}$} node (b) at (2,0) {$S_{j}$} node (c) at (5,0) {$i\not=j = 1,2,\ldots,N,$} node (d) at (9,0) {$(\mathcal{N})$};
\draw[arrows=->] ([xshift =0.5mm, yshift=-0.5mm]a.east) -- node [below] {\scalebox{.8}[.8]{$a_{ji}$}} ([xshift=-0.5mm,yshift=-0.5mm]b.west);
\draw[arrows=->] ([xshift =-0.5mm,yshift=0.5mm]b.west) -- node [above] {\scalebox{.8}[.8]{$a_{ij}$}} ([xshift=0.5mm,yshift=0.5mm]a.east);
\end{tikzpicture}} 
\end{flushright}
where $S_i, i = 1,2,\ldots, N$ are different chemical substances (or species) and $a_{ij}, a_{ji} \geq 0$ are constant reaction rates. In particular, $a_{ij}$ denotes the reaction rates from the species $S_j$ to $S_i$. The considered reaction network is contained in a bounded vessel (or reactor) $\Omega\subset\mathbb R^n$, where $\Omega$ is a smooth, bounded domain with outer unit normal $\nu$.
The substances $S_i$ are described by spatio-temporal concentrations $u_i(t,x)$ at position $x\in \Omega$ and time $t\geq 0$.  In addition, each substance $S_i$ is assumed to diffuse with a diffusion coefficient $d_i\geq 0$, $i=1,2,\ldots,N$. 
Finally, using the mass action law as model for the reaction rates leads to the following linear reaction-diffusion system:
\begin{equation}\label{VectorSystem}
	\begin{cases}
		\cc_t = \mathbb D\Delta \cc + A\cc, &\qquad x\in\Omega, \qquad t>0,\\
		\partial_{\nu}\cc = 0, &\qquad x\in\partial\Omega, \qquad t>0,\\
		\cc(x,0) = \cc_0(x)\ge0, &\qquad x\in\Omega,
	\end{cases}
\end{equation}
where $\cc(t,x) = [u_1(t,x), \ldots, u_N(t,x)]^{T}$ denotes the vector of concentrations subject to non-negative initial conditions $\cc_0(x) = [u_{1,0}(x)\ge0, \ldots, u_{N,0}(x)\ge0]^T$, $\mathbb D = \text{diag}(d_1,d_2,\ldots,d_N)$ denotes the diagonal diffusion matrix and the reaction matrix $A = (a_{ij}) \in \mathbb{R}^{N\times N}$ satisfies the following conditions:
\begin{equation}\label{a_jj}
\begin{cases}
	a_{ij} \geq 0, &\qquad \text{for all } i\not=j, \quad i,j =1,2,\ldots,N,\\ 
	a_{jj} = -\sum_{i=1,i\not= j}^{N}a_{ij}, &\qquad \text{for all } j =1,2,\ldots,N.
\end{cases}
\end{equation}
The solution to system \eqref{VectorSystem} satisfies the following mass conservation law
\begin{equation}\label{mass_conservation}
	\sum_{i=1}^{N}\int_{\Omega}u_i(t,x)dx = M:= \sum_{i=1}^{N}\intO u_{i,0}(x)dx >0 \qquad \text{ for all } t>0.
\end{equation}
To study the convergence to equilibrium we consider the following quadratic relative entropy functional
\begin{equation}\label{RelativeEntropy_Recall}
	\mathcal{E}(\cc_1|\cc_2)(t) = \sum_{i=1}^{N}\int_{\Omega}\frac{|u_i|^2}{v_i}dx
\end{equation}
between two solutions $\cc_1=[u_1, \ldots, u_N]^{T}$ and $\cc_2=[v_1, \ldots, v_N]^{T}$ (with respect to possibly different initial data) and its entropy dissipation $\mathcal{D}(\cc_1|\cc_2) = -\frac{d}{dt}\mathcal{E}(\cc_1|\cc_2)$:
\begin{equation}
\mathcal{D}(\cc_1|\cc_2)  
=  2\sum_{i=1}^{N}d_i\int_{\Omega}v_i\left|\nabla\frac{u_i}{v_i}\right|^2dx
+ \sum_{i,j=1; i<j}^{N}\int_{\Omega}(a_{ij}v_j + a_{ji}v_i)\biggl(\frac{u_i}{v_i} - \frac{u_j}{v_j}\biggr)^{\!2}dx.
\label{EntropyDissipation}
\end{equation}

The network $\mathcal N$ is called {\it weakly reversible} if for any reaction $S_i \xrightarrow{a_{ji}} S_j$ with $a_{ji}>0$, there exist $S_{k_1}, \ldots, S_{k_r}$ such that $S_j \equiv S_{k_0} \xrightarrow{a_{k_1k_0}} S_{k_1} \xrightarrow{a_{k_2k_1}} S_{k_2} \rightarrow \ldots \rightarrow S_{k_r} \xrightarrow{a_{k_{r+1}k_r}} S_i \equiv S_{k_{r+1}}$ with $a_{k_{i+1}k_{i}} >0$ for all $i=0, 1, \ldots, r$. Intuitively, a network $\mathcal N$ is weakly reversible if for any reaction from $S_i$ to $S_j$ we can find a returning chain of reactions, which starts from $S_j$ and finishes at $S_i$. 

If $\mathcal N$ is weakly reversible, then the associated reaction graph can be composed of multiple disjoint strongly connected components, see \cite{FPT}. However, these components are entirely independent since every node of a first order reaction network represents only one species. Therefore, all such disjoint components can be treated separately from one another and w.l.o.g. we {\it shall consider in the following  only weakly reversible networks consisting of one strongly connect component}. Finally, all weakly reversible first order reaction networks satisfy the complex balance condition, see \cite{FPT}.
The following theorem is one of the main results in \cite{FPT}.
\begin{theorem}[Exponential convergence to equilibrium for first order reaction networks, \cite{FPT}]\
Assume that the reaction network $\mathcal N$ is weakly reversible {and consists w.l.o.g. of only one strongly connect component}. Moreover, assume that there is at least one positive diffusion coefficient, that is, there exists $i_0$ such that $d_{i_0}>0$. 
Let $\Omega \subset \mathbb R^n$, $n\ge1$ be a bounded domain with sufficiently smooth boundary $\partial\Omega$ (e.g. $\partial\Omega \in C^{2+\epsilon}$, $\epsilon>0$).

Then, for any given positive initial mass $M>0$, system \eqref{VectorSystem} possesses a unique positive complex balance equilibrium $\cc_{\infty} = (u_{1,\infty}, \ldots, u_{n,\infty})$ satisfying the mass conservation law \eqref{mass_conservation}. Moreover, each solution to \eqref{VectorSystem} with positive initial mass $M$ converges exponentially to this equilibrium with computable rates, i.e.
	\begin{equation*}
		\sum_{i=1}^{N}\|u_i(t) - u_{i,\infty}\|_{L^2(\Omega)}^2 \leq Ce^{-\lambda t} \qquad \text{ for all } t>0,
	\end{equation*}
	where $C, \lambda$ are constants, which can be computed explicitly.
\end{theorem}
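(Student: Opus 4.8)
The plan is to run the entropy method: first build the equilibrium explicitly, then reduce convergence to an entropy--entropy dissipation (EED) functional inequality, and finally close with a Grönwall argument.

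\textbf{Construction and uniqueness of the equilibrium.} By condition \eqref{a_jj} every column of $A$ sums to zero, so $\mathbf 1 = (1,\dots,1)^T \in \ker A^T$. Since $\mathcal N$ is weakly reversible and consists of a single strongly connected component, $A$ is (up to transposition) an irreducible graph-Laplacian-type matrix, so for large $c>0$ the matrix $A+cI$ is nonnegative and irreducible; Perron--Frobenius then gives that $\ker A$ is one-dimensional and spanned by a strictly positive vector $\cc_\infty^0$. Setting $\cc_\infty = \frac{M}{|\Omega|\sum_i (\cc_\infty^0)_i}\,\cc_\infty^0$ produces a spatially homogeneous, strictly positive equilibrium satisfying the mass constraint \eqref{mass_conservation}; positivity of its (constant) components furnishes the two-sided bounds $0<a\le u_{i,\infty}\le A$. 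For uniqueness I would take any positive stationary solution $\tilde\cc$ of the same mass: being stationary, $\mathcal E(\tilde\cc|\cc_\infty)$ is constant, so $\mathcal D(\tilde\cc|\cc_\infty)=0$ by \eqref{EntropyDissipation}. This forces $\nabla\frac{\tilde u_{i_0}}{u_{i_0,\infty}}=0$ for the diffusing species and $\frac{\tilde u_i}{u_{i,\infty}}=\frac{\tilde u_j}{u_{j,\infty}}$ across every active edge; strong connectivity propagates this to a common ratio $c(x)$, the diffusion constraint makes $c$ constant, and mass conservation fixes $c=1$, whence $\tilde\cc=\cc_\infty$.

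\textbf{Reduction to an EED inequality.} Writing $\mathbf w = \cc - \cc_\infty$ and using linearity together with the stationarity of $\cc_\infty$, the difference $\mathbf w$ solves the same system \eqref{VectorSystem} under Neumann data and carries zero total mass. Applying \eqref{RelativeEntropy_Recall}--\eqref{EntropyDissipation} to the pair $(\mathbf w,\cc_\infty)$ yields $\frac{d}{dt}\mathcal E(\mathbf w|\cc_\infty) = -\mathcal D(\mathbf w|\cc_\infty)$, so it suffices to establish
\[
\mathcal D(\mathbf w|\cc_\infty)\ \ge\ \lambda\,\mathcal E(\mathbf w|\cc_\infty)
\]
for all $\mathbf w$ with $\sum_{i=1}^N\int_\Omega w_i\,dx = 0$. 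Grönwall's lemma then gives $\mathcal E(\mathbf w|\cc_\infty)(t)\le \mathcal E(\mathbf w|\cc_\infty)(0)\,e^{-\lambda t}$, and the positive two-sided bounds on $u_{i,\infty}$ make this weighted functional equivalent to $\sum_i\|u_i(t)-u_{i,\infty}\|_{L^2(\Omega)}^2$, yielding the claimed decay.

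\textbf{The EED inequality (main obstacle).} The difficulty is that only one diffusion coefficient $d_{i_0}$ is assumed positive, so the diffusion part of $\mathcal D$ cannot directly control the spatial fluctuations of all species. I would split each component as $w_i = \bar w_i + (w_i-\bar w_i)$ with $\bar w_i = |\Omega|^{-1}\int_\Omega w_i$, decomposing $\mathcal E(\mathbf w|\cc_\infty)$ into a macroscopic part $\sum_i u_{i,\infty}^{-1}|\Omega|\,\bar w_i^2$ and a fluctuation part $\sum_i u_{i,\infty}^{-1}\int_\Omega (w_i-\bar w_i)^2\,dx$. The macroscopic part is handled by the reaction contribution to $\mathcal D$ evaluated on averages, which is a finite-dimensional coercivity statement: the complex-balanced reaction quadratic form has a spectral gap on the mass-zero subspace, provable from strong connectivity of the reaction graph. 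The fluctuation part is the delicate one: I would bound the fluctuation of the diffusing species $u_{i_0}$ by the Poincaré inequality applied to the diffusion term $2 d_{i_0}\int_\Omega u_{i_0,\infty}|\nabla\frac{w_{i_0}}{u_{i_0,\infty}}|^2\,dx$, and then exploit weak reversibility a second time: the reaction terms $(a_{ij}u_{j,\infty}+a_{ji}u_{i,\infty})(\frac{w_i}{u_{i,\infty}}-\frac{w_j}{u_{j,\infty}})^2$ penalise differences of the scaled concentrations along every active edge, allowing the gradient control to be transported from $u_{i_0}$ to all remaining species along connecting reaction paths. Combining the macroscopic and fluctuation estimates, and absorbing the cross terms between averages and fluctuations by Young's inequality, produces an explicit $\lambda>0$ in terms of the Poincaré constant, the reaction rates $a_{ij}$, the equilibrium bounds $a,A$, and the spectral gap of the reaction graph.
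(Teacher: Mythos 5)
Your proposal is correct and follows essentially the same route as the proof the paper relies on (quoted from \cite{FPT} and mirrored in the paper's own Lemmas \ref{EEDE_Lgl} and \ref{EEDE_JAK}): Perron--Frobenius for the positive homogeneous equilibrium, uniqueness via vanishing of the entropy dissipation \eqref{EntropyDissipation}, and an entropy entropy-dissipation inequality obtained by splitting into averages and fluctuations, with the averages handled by the finite-dimensional coercivity of Lemma \ref{finite_dimension} and the fluctuations of non-diffusing species recovered from the single diffusing species via the reaction terms and the triangle inequality. No gaps to report.
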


The following elementary inequality, which was proved in \cite{FPT} (in a variant), will be useful in the following sections.
\begin{lemma}[A finite dimensional inequality, see \cite{FPT}]\label{finite_dimension} \
Assume that the network $\mathcal N$ is weakly reversible {and consists of one strongly connected component}.
  
Then,  for all $\cc = (c_1, \ldots, c_N)$ satisfying
$\sum_{i=1}^{N}\alpha_ic_i = 0$
with $\alpha_1, \ldots, \alpha_N$ being positive constants, there holds
\begin{equation*}
	\sum_{i,j=1; i<j}^{N}(a_{ij} + a_{ji})(c_i - c_j)^2 \geq \eta\sum_{i=1}^{N}c_i^2
\end{equation*}
for an {\normalfont{explicit}} constant $\eta >0$ depending only on $\alpha_i$, $a_{ij}$ and $N$.
\end{lemma}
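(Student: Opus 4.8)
The plan is to prove Lemma~\ref{finite_dimension} by a compactness-and-connectedness argument, exploiting that the left-hand side is a quadratic form vanishing exactly on the constants, while the linear constraint $\sum_i \alpha_i c_i = 0$ with all $\alpha_i>0$ excludes the nonzero constant vectors. First I would introduce the quadratic form $Q(\cc) := \sum_{i<j}(a_{ij}+a_{ji})(c_i-c_j)^2$. Because the network $\mathcal N$ consists of one strongly connected component, the \emph{symmetrised} connectivity encoded by the positive weights $w_{ij} := a_{ij}+a_{ji}$ still connects every pair of nodes through a chain of edges with positive weight: weak reversibility guarantees a returning path for each directed edge, so the undirected graph with an edge wherever $w_{ij}>0$ is connected. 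The key algebraic fact is then that $Q(\cc)=0$ if and only if $c_i=c_j$ along every positive-weight edge, hence (by connectedness) if and only if $\cc$ is a constant vector $c(1,\ldots,1)$. This identifies the kernel of $Q$ precisely.

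Next I would set up the optimisation. Define
\begin{equation*}
  \eta := \inf\Bigl\{\, Q(\cc) \ :\ \sum_{i=1}^N c_i^2 = 1,\ \sum_{i=1}^N \alpha_i c_i = 0 \,\Bigr\}.
\end{equation*}
The admissible set is the intersection of the unit sphere in $\mathbb R^N$ with the hyperplane $\{\sum_i \alpha_i c_i = 0\}$; it is closed and bounded, hence compact and (since $N\ge 2$) nonempty, so the continuous function $Q$ attains its infimum at some $\cc^\star$. By homogeneity, establishing $\eta>0$ immediately yields $Q(\cc)\ge \eta\sum_i c_i^2$ for all $\cc$ in the constraint hyperplane, which is exactly the claimed inequality. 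It remains to rule out $\eta=0$.

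Suppose for contradiction $\eta=0$. Then $Q(\cc^\star)=0$, so by the kernel characterisation $\cc^\star$ is a constant vector, say $\cc^\star = c(1,\ldots,1)$ with $c\neq 0$ (forced by $\sum_i (c_i^\star)^2=1$). But then the constraint gives $\sum_i \alpha_i c_i^\star = c\sum_i \alpha_i = 0$, which is impossible since all $\alpha_i>0$ and $c\neq 0$. This contradiction shows $\eta>0$. I would remark that this argument is precisely why the positivity of the weights $\alpha_i$ is needed: it is the constraint hyperplane meeting the kernel of $Q$ only at the origin that makes the Poincar\'e-type constant strictly positive.

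The main obstacle is not the existence of some positive $\eta$ — that follows cleanly from compactness — but obtaining it \emph{explicitly} as the statement demands, depending only on $\alpha_i$, $a_{ij}$ and $N$. The abstract minimisation gives no formula. To make $\eta$ explicit I would instead argue constructively: using the spanning tree of the connected positive-weight graph, bound $\sum_{i<j}(c_i-c_j)^2$ (over \emph{all} pairs) from above by a constant multiple of $Q(\cc)$, where the constant is controlled by the minimal edge weight along tree paths and the tree diameter (both bounded in terms of the $a_{ij}$ and $N$). One then relates $\sum_{i<j}(c_i-c_j)^2 = N\sum_i c_i^2 - (\sum_i c_i)^2$ to $\sum_i c_i^2$ by estimating the mean $\frac1N\sum_i c_i$ through the constraint $\sum_i \alpha_i c_i=0$: writing $\bar c = \frac1N\sum_i c_i$, the constraint controls $\bar c$ in terms of the deviations $c_i-\bar c$ and the ratios of the $\alpha_i$, giving a quantitative lower bound on $\sum_{i<j}(c_i-c_j)^2/\sum_i c_i^2$ and hence an explicit $\eta$. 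I would expect this bookkeeping — tracking the dependence of $\eta$ on $\max_i\alpha_i/\min_i\alpha_i$, on $\min\{w_{ij}:w_{ij}>0\}$, and on $N$ — to be the technically delicate part, but it is the route that meets the explicitness requirement and mirrors the approach of \cite{FPT}.
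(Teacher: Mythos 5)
Your proposal is correct, and its constructive second half is essentially the paper's proof: the paper first uses weak reversibility plus the single strongly connected component to pass, via iterated triangle inequalities along chains of nontrivial reactions (your spanning-tree argument), from $\sum_{i<j}(a_{ij}+a_{ji})(c_i-c_j)^2$ to $\zeta\sum_{i<j}(c_i-c_j)^2$, and then invokes the constraint to control $\sum_i c_i^2$. The only real difference is in that last step: you go through the identity $\sum_{i<j}(c_i-c_j)^2 = N\sum_i c_i^2-(\sum_i c_i)^2$ and estimate the mean $\bar c$ from the constraint, whereas the paper fixes an arbitrary index $i_0$, writes $\sum_{i<j}(c_i-c_j)^2\ge\sum_{j\ne i_0}\alpha_j^{-2}(\alpha_j c_{i_0}-\alpha_j c_j)^2$, applies Cauchy--Schwarz, and uses $-\sum_{j\ne i_0}\alpha_j c_j=\alpha_{i_0}c_{i_0}$ to obtain $c_{i_0}^2$ times the explicit factor $(\sum_j\alpha_j)^2/\bigl((N-1)\max_i\alpha_i^2\bigr)$ directly; both yield an explicit $\eta$ with the same qualitative dependence on $N$, the $\alpha_i$ and the minimal positive weight. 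Your opening compactness argument is a valid but genuinely different route that proves only the existence of some $\eta>0$; you correctly identify that it fails the explicitness requirement, and the paper does not use it at all, so it is dispensable.
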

\begin{proof}
	First, thanks to the weak reversibility of $\mathcal N$, and since $\mathcal N$ contains only one strongly connected component, we observe for every $S_i$ and $S_j$ in $\mathcal N$ that there exist a chain of nontrivial reactions starting from $S_i$ and finishing at $S_j$ and vice versa. Hence, by using iteratively the triangle inequality along such chains of reactions in cases where $a_{ij} = a_{ji} = 0$, we can estimate (see \cite[Lemma 2.4]{FPT} for the details) 
	\begin{equation}\label{e1}
		\sum_{i,j=1; i<j}^{N}(a_{ij} + a_{ji})(c_i - c_j)^2 \geq \zeta\sum_{i,j=1;i<j}^{N}(c_i - c_j)^2
	\end{equation}
for some explicit constant $\zeta(N)>0$ only depending on ${N}$.
Now, for any $1\leq i_0\leq N$
\begin{align}
&\sum_{i,j=1;i<j}^{N}(c_i - c_j)^2 \geq \sum_{j=1; j\not=i_0}^{N}(c_{i_0} - c_j)^2 \geq \sum_{j=1; j\not=i_0}^{N}\frac{1}{\alpha_j^2}(\alpha_jc_{i_0} - \alpha_jc_j)^2\nonumber\\
			&\qquad\geq \frac{\bigl(c_{i_0}\sum_{j=1;j\not=i_0}^{N}\alpha_j - \sum_{j=1;j\not=i_0}^{N}\alpha_jc_j\bigr)^2}{(N-1)\max_{i=1,\ldots,N}\{\alpha_i^2\}}
			= \frac{\bigl(\sum_{j=1}^{N}\alpha_j\bigr)^2}{(N-1)\max_{i=1,\ldots,N}\{\alpha_i^2\}}\,c_{i_0}^2.\label{e2}
\end{align}
	Since $1\leq i_0\leq N$ is arbitrary, by combining \eqref{e1} and \eqref{e2} we get the desired result.
\end{proof}

\section*{Proof of Theorem \ref{mainresult1}}
In this section, we denote by $\cc = (L, P, \ell, p)$ the vector of concentrations of system \eqref{Lgl_system}. Following the previous section, we introduce the relative entropy between two solution trajectories $\cc_1= (L_1, P_1, \ell_1, p_1)$ and $\cc_2 = (L_2, P_2, \ell_2, p_2)$ as follow
\begin{equation}\label{Lgl_entropy}
	\E(\cc_1|\cc_2) = \intO\left(\frac{L_1^2}{L_2} + \frac{P_1^2}{P_2}\right)dx + \intG \frac{\ell_1^2}{\ell_2}dS + \intGG \frac{p_1^2}{p_2}dS.
\end{equation}
Corresponding to \eqref{EntropyDissipation}, we can compute the entropy dissipation functional corresponding to \eqref{Lgl_entropy}, i.e $\D(\cc_1|\cc_2) = -\frac{d}{dt}\E(\cc_1|\cc_2)$  as
\begin{align}
		&\D(\cc_1|\cc_2) 
			 = 2d_L\intO{L_2\left|\nabla\frac{L_1}{L_2}\right|^2dx} + 2d_P\intO{P_2\left|\nabla\frac{P_1}{P_2}\right|^2dx} + 2d_{\ell}\intG{\ell_2\left|\nabla_{\Gamma}\frac{\ell_1}{\ell_2}\right|^2dS}\nonumber\\
			 &+ 2d_p\intGG{p_2\left|\nabla\frac{p_1}{p_2}\right|^2dS}
			  + \intO{(\alpha P_2 + \beta L_2) \left|\frac{L_1}{L_2}-\frac{P_1}{P_2}\right|^2dx} \nonumber\\
			 &+ \intG{\!(\lambda L_2+\gamma \ell_2)\left|\frac{L_1}{L_2} - \frac{\ell_1}{\ell_2}\right|^2\!dS}+ \intGG{\!\xi p_2\left|\frac{P_1}{P_2} - \frac{p_1}{p_2}\right|^{2}\!dS} + \intGG{\!\sigma\ell_2\left|\frac{\ell_1}{\ell_2} - \frac{p_1}{p_2}\right|^2\!dS}.\label{Lgl_dissipation}
	\end{align}

The following Lemma \ref{Lgl_equilibrium} proves the existence of a unique positive equilibrium provided positive initial mass 
$M>0$. The main difficulties are the complex balance structure of the equilibrium (with the associated mass conservation law preventing standard coercivity arguments) and the mixed boundary conditions impeding classical solutions and thus, the direct use of classical maximum principles.  
   
\begin{lemma}[Existence of a unique positive equilibrium]\label{Lgl_equilibrium}\hfill\\
	Let $\Omega$ be a bounded domain of $\mathbb R^n$, $n=2,3$ with smooth boundary $\Gamma = \partial\Omega$. 
	
Then, for any positive initial mass $M>0$, the system \eqref{Lgl_system} possesses a unique positive equilibrium $\cc_{\infty} = (\Li, \Pi, \li, \pi)$ satisfying the mass conservation
	\begin{equation}\label{Lgl_massinf}
		\int_{\Omega}(\Li(x) + \Pi(x))dx + \int_{\Gamma}\li(x)dS + \int_{\Gamma_2}\pi(x)dS = M.
	\end{equation}
	Moreover, $\Li \in C(\overline{\Omega})\cap H^2(\Omega)$, $\Pi\in L^{\infty}(\Omega)\cap H^{3/2}(\Omega)$, $\li\in C({\Gamma})\cap H^2(\Gamma)$ and $\pi\in C(\overline{\Gamma_2})\cap H^2(\Gamma_2)$, and satisfy
	for some constants $0<a\leq A <+\infty$
	\begin{equation*}
			0 < a \leq \Li(x)\leq A \quad \text{ for all } x\in\overline{\Omega},
	\end{equation*}
	\begin{equation*}
			0 < a \leq \Pi(x)\leq A \quad \text{ for almost all } x\in\Omega,
	\end{equation*}
	\begin{equation*}
		0 < a \leq \li(x) \leq A \quad \text{ for all } x\in{\Gamma},
	\end{equation*}
	\begin{equation*}
		0 < a \leq \pi(x) \leq A \quad \text{ for all } x\in\overline{\Gamma_2}.
	\end{equation*}
	
\end{lemma}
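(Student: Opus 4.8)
The plan is to realise $\cc_\infty=(\Li,\Pi,\li,\pi)$ as the solution of the stationary version of \eqref{Lgl_system}, namely
\begin{equation*}
\begin{aligned}
&-d_L\Delta\Li=-\beta\Li+\alpha\Pi,\qquad -d_P\Delta\Pi=\beta\Li-\alpha\Pi \quad\text{in }\Omega,\\
&-d_\ell\Delta_{\Gamma}\li=\lambda\Li-(\gamma+\sigma\chi_{\Gamma_2})\li \quad\text{on }\Gamma,\qquad -d_p\Delta_{\Gamma_2}\pi=\sigma\li-\xi\pi \quad\text{on }\Gamma_2,
\end{aligned}
\end{equation*}
supplemented by $\partial_{\nu_2}\pi=0$ on $\partial\Gamma_2$, the coupling conditions $d_L\partial_{\nu}\Li=-\lambda\Li+\gamma\li$ and $d_P\partial_{\nu}\Pi=\chi_{\Gamma_2}\xi\pi$ on $\Gamma$, and the normalisation \eqref{Lgl_massinf}. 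Two structural features, both flagged above, prevent a routine argument: the reaction network of Figure~\ref{LGLmodel} is only complex (not detailed) balanced, so the stationary bilinear form is non-symmetric and sustains a steady circulating flux, whence the naive energy $\cc\mapsto B(\cc,\cc)$ need not be coercive; and the mass conservation \eqref{cons} forces the stationary operator to have a nontrivial kernel, which is precisely the one-dimensional ray of equilibria we wish to parametrise by $M$.

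For existence and uniqueness I would avoid Lax--Milgram (defeated by the above non-coercivity) and argue by positivity. First eliminate the surface unknowns: given a trace $\Li|_{\Gamma}$, the operator $(-d_\ell\Delta_{\Gamma}+\gamma+\sigma\chi_{\Gamma_2})$ has strictly positive zeroth-order part, hence is boundedly invertible and positivity preserving on $\Gamma$, so $\li=(-d_\ell\Delta_{\Gamma}+\gamma+\sigma\chi_{\Gamma_2})^{-1}(\lambda\Li|_{\Gamma})$, and likewise $(-d_p\Delta_{\Gamma_2}+\xi)^{-1}$ yields $\pi$ from $\li$. Substituting back reduces the problem to a cooperative elliptic system for $(\Li,\Pi)$ in $\Omega$ carrying a nonlocal but positive Robin/Neumann boundary operator, whose resolvent is compact and positivity preserving. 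The network being weakly reversible links all four species, so this operator is irreducible; a Krein--Rutman / Perron--Frobenius argument then provides a simple principal eigenvalue with a strictly positive eigenfunction. Equivalently, the linear evolution semigroup of \eqref{Lgl_system} is positive, irreducible and mass conserving, so $0$ is its simple principal eigenvalue (the conservation law \eqref{cons} exhibits $0$ in the spectrum, positivity makes it dominant and simple) with a positive eigenfunction; the mass constraint \eqref{Lgl_massinf} then selects the unique representative of total mass $M$, giving both existence and uniqueness.

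Granting a nonnegative weak equilibrium, I would bootstrap regularity compartment by compartment. On the closed surface $\Gamma$ the right-hand side $\lambda\Li-(\gamma+\sigma\chi_{\Gamma_2})\li$ is bounded, hence in $L^2(\Gamma)$, so elliptic regularity gives $\li\in H^2(\Gamma)$ (the discontinuity of $\chi_{\Gamma_2}$ obstructs higher regularity but not $H^2$), and similarly $\pi\in H^2(\Gamma_2)$; in dimension $n\le3$ these embed into $C(\Gamma)$, resp. $C(\overline{\Gamma_2})$. For the volume part, the Robin problem for $\Li$ has $L^2$ forcing $\alpha\Pi$ and boundary datum $\gamma\li\in H^{1/2}(\Gamma)$, so $\Li\in H^2(\Omega)\cap C(\overline\Omega)$. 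The reduced regularity claimed for $\Pi$ is exactly due to its Neumann datum $\chi_{\Gamma_2}\xi\pi$ being discontinuous across $\partial\Gamma_2$: this datum sits just below the $H^{1/2}(\Gamma)$ threshold, so elliptic estimates yield only $\Pi\in H^{3/2}(\Omega)$, while $L^\infty$-boundedness is recovered by a De\,Giorgi--Moser $L^p$--$L^\infty$ argument rather than by Sobolev embedding.

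Finally, the positive lower bounds: once $\Li\ge a>0$ is known, the remaining bounds follow by comparison with explicit constants. Indeed the constant $\beta a/\alpha$ satisfies $-d_P\Delta(\cdot)+\alpha(\cdot)=\beta a\le\beta\Li$ with zero Neumann data $\le\chi_{\Gamma_2}\xi\pi$, so testing the equation for $\Pi$ with $(\Pi-\beta a/\alpha)^-$ gives $\Pi\ge\beta a/\alpha$ a.e.\ in $\Omega$; analogous constant subsolutions (using the coefficient bound $\gamma+\sigma\chi_{\Gamma_2}\le\gamma+\sigma$) force $\li,\pi$ below by positive constants as well. The whole difficulty therefore concentrates in the strictly positive lower bound for $\Li$ up to the boundary. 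I expect this to be the main obstacle: because the discontinuous mixed condition $d_P\partial_{\nu}\Pi=\chi_{\Gamma_2}\xi\pi$ rules out classical solutions near $\partial\Gamma_2$, the classical strong maximum principle and Hopf lemma do not apply directly. The plan is to argue by contradiction that no component's infimum can vanish: a weak (De\,Giorgi--Nash--Moser) Harnack inequality applied to the supersolution $-d_L\Delta\Li+\beta\Li=\alpha\Pi\ge0$ yields strict interior positivity once $\Li\not\equiv0$, the Robin condition $d_L\partial_{\nu}\Li+\lambda\Li=\gamma\li\ge0$ propagates positivity to $\Gamma$, and the cooperative coupling across the compartments then shows that a zero infimum of any one species would force all of them to vanish, contradicting the positive mass $M$ in \eqref{Lgl_massinf}; continuity of $\Li,\li,\pi$ on the respective compact sets upgrades strict positivity to a uniform constant $a>0$.
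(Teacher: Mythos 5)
Your proposal reaches all the stated conclusions, but it takes a genuinely different route from the paper on the existence--uniqueness part, and partially different routes for the bounds. The paper obtains a nonnegative equilibrium by a Schauder fixed-point argument applied to a decoupled auxiliary system \eqref{iteration} (nonnegativity of each iterate checked by testing with negative parts, compactness from the elliptic a priori estimate), and proves uniqueness \emph{a posteriori} from the entropy structure: the dissipation \eqref{Lgl_dissipation} between two equilibria vanishes, forcing $\Li^{(1)}/\Li\equiv\Pi^{(1)}/\Pi\equiv\li^{(1)}/\li\equiv\pi^{(1)}/\pi\equiv k$, and the mass constraint gives $k=1$. Your Krein--Rutman / principal-eigenvalue argument delivers existence, a.e.-positivity and uniqueness in one stroke and correctly identifies $0$ as the principal eigenvalue via the adjoint kernel element $(1,1,1,1)$; its cost is that everything hinges on \emph{irreducibility} of the positive resolvent of the full volume--surface operator, which you assert from weak reversibility but do not establish --- and for this system it is not automatic (the compartments communicate only through traces, and $p$ lives only on the subdomain $\Gamma_2$), so this is where the real work of your approach would lie. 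The paper's entropy-dissipation uniqueness proof is essentially free since \eqref{Lgl_dissipation} is needed anyway. On the bounds you and the paper agree on the regularity bookkeeping, including the $H^{3/2}$ obstruction for $\Pi$; for $\|\Pi\|_{L^\infty}$ you invoke Moser iteration where the paper constructs an explicit $H^2$ supersolution $\hat P$ with constant data (more convenient for the explicit constants the entropy method requires); and for the lower bounds you run the cascade in the opposite direction (weak Harnack plus a Hopf-type boundary argument for $\Li$ first, then constant subsolutions for $\Pi,\li,\pi$), whereas the paper first shows $\li\not\equiv0$ by an energy identity and then propagates strict positivity through the chain of auxiliary classical problems for $\pi$, $\ell^*$, $L^*$, $P^*$ via the strong maximum principle and weak comparison. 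Both cascades work; yours needs the Hopf lemma for the Robin problem of $\Li$, which is available since $\Li\in W^{2,p}(\Omega)$ for all $p<\infty$ once $\Pi\in L^\infty(\Omega)$ and $\li\in C(\Gamma)$ are known, so the order of the two steps must be arranged accordingly.
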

\begin{proof}
	We will first prove that the equilibrium system has non-negative solutions and then show that equilibria are indeed bounded and strictly positive. Finally, the uniqueness follows from the vanishing of entropy-dissipation functional \eqref{Lgl_dissipation}.
	
In order to prove the existence of nonnegative equilibria via a fixed point argument, we consider the following auxiliary system
	\begin{equation}\label{iteration}
		\begin{cases}
			-d_L \Delta L + \beta L = \alpha P_0, \quad -d_P\Delta P + \alpha P = \beta L_0,&x\in\Omega,\\
			\ \ \ \,d_L\partial_{\nu}L + \lambda L= \gamma \ell_0, \quad\ \ \ d_P\partial_{\nu}P =  \chi_{\Gamma_2} \xi p_0, &x\in\Gamma,\\
			-d_{\ell}\Delta_{\Gamma}\ell + (\lambda + \sigma \chi_{\Gamma_2})\ell = \lambda L_0|_{\Gamma}, &x\in\Gamma,\\
			-d_p\Delta_{\Gamma_2}p + \xi p = \sigma \ell_0, &x\in\Gamma_2,\\
			\qquad d_p\partial_{\nu_{\Gamma_2}}p = 0, &x\in\partial\Gamma_2
		\end{cases}
	\end{equation}
	where $(L_0, P_0, \ell_0, p_0) \in \mathcal Y$ are given in the space
	\begin{equation*}
		\mathcal Y = \{(U,V,u,v)\in H^1(\Omega)\times L^2(\Omega)\times L^2(\Gamma)\times L^2(\Gamma_2): U, V, u, v\geq 0\}.
	\end{equation*}
	By standard linear elliptic equation theory, there exists a unique weak solution $(L, P, \ell,p)\in H^1(\Omega)\times H^1(\Omega)\times H^1(\Gamma)\times H^1(\Gamma_2)$ for \eqref{iteration}. Thanks to the nonnegativity of $(L_0, P_0, \ell_0, p_0) \in \mathcal Y$ and the weak maximum principle (cf. e.g. \cite{GT}), this solution is also nonnegative: Indeed, by testing, for instance, the equation for $P$ by the negative part $P_{-}=-\min\{P,0\}$, 
we calculate with $P P_{-} = -(P_{-})^2$
\begin{equation}\label{weakMax}
- \int_{\Gamma}  \chi_{\Gamma_2} \xi p_0 P_{-}\,dS - d_P \int_{\Omega} {\chi_{\{P\leq 0\}}}|\nabla P|^2  \,dx= 
+ \alpha \int_{\Omega} (P_{-})^2 + \beta  \int_{\Omega} L_0 P_{-}\,dx,
\end{equation}
and observe that the left hand side is nonpositive while the right hand side is nonnegative provided that $p_0$ and $L_0$ are nonnegative. Thus, both sides have to equal zero and, as a consequence, $\int_{\Omega} (P_{-})^2=0$, which implies the 
nonnegativity of $P$. 

Moreover, the smoothness of the boundary $\partial\Omega$ allows to deduce higher regularity for $L$, namely $L\in H^2(\Omega)$ thanks to $P_0\in L^2(\Omega)$ and $\ell_0\in L^2(\Gamma)$. In particular, the following {\it a prior} estimate holds
	\begin{multline*}
		\|L\|_{H^2(\Omega)} + \|P\|_{H^1(\Omega)} + \|\ell\|_{H^1(\Gamma)} + \|p\|_{H^1(\Gamma_2)}\\ \leq C\left(\|L_0\|_{H^1(\Omega)} + \|P_0\|_{L^2(\Omega)} + \|\ell_0\|_{L^2(\Gamma)} + \|p_0\|_{L^2(\Gamma_2)}\right).
	\end{multline*}
	By defining $\mathfrak T: \mathcal Y \rightarrow \mathcal Y$ by $\mathfrak T(L_0,P_0,\ell_0,p_0) = (L, P, \ell, p)$, we obtain from the previous {\it a priori} estimate that $\mathfrak T$ is a compact operator. Hence, it follows from the Schauder fixed point theorem that there exist fixed points $(\Li, \Pi, \li, \pi)$ of $\mathfrak T$, and these fixed points are thus nonnegative solutions to the equilibrium system. Note that uniqueness of the fixed points $(\Li, \Pi, \li, \pi)$ can not be supposed as such, since we expect equilibria to exist for any given mass $M$. 	
\medskip

	Next, again by maximal regularity for linear elliptic equations, we obtain $\Li\in H^2(\Omega)$, $\li \in H^2(\Gamma)$ and $\pi\in H^2(\Gamma_2)$, which implies $\Li\in C(\overline{\Omega})$, $\li\in C({\Gamma})$ and $\pi \in C(\overline{\Gamma_2})$ thanks to Sobolev embeddings and $\Omega \subset \mathbb R^n$ with $n=2,3$.
The continuity of $\Li, \li$ and $\pi$ and the compactness of $\overline{\Omega}$, $\Gamma$ and $\overline{\Gamma_2}$ imply also the upper bounds 
$\Li, \li, \pi \leq A <+\infty$ for a constant $A$. 

For $\Pi$ satisfying the mixed Neumann boundary condition $d_{P}\partial_{\nu}\Pi = \chi_{\Gamma_2}\pi$ with $\chi_{\Gamma_2}\in L^{\infty}(\Gamma)$ being discontinuous, maximal elliptic regularity only yields $\Pi\in H^{3/2}(\Omega)$, which is insufficient to conclude boundedness in three space dimensions, see e.g. \cite{Evans}.
However, we are able to construct supersolutions $\hat{P}$ as the solutions of  
\begin{equation*}
\begin{cases}
-d_P\Delta \hat{P} + \alpha \hat{P} = \beta \|\Li\|_{\infty}, &x\in\Omega,\\
d_P\partial_{\nu}\hat{P} = \xi \|\pi\|_{\infty}, &x\in\partial\Gamma.
\end{cases}
\end{equation*}
For the supersolutions $\hat{P}$, standard elliptic theory implies 
the maximal regularity $\hat{P}\in H^2(\Omega)$ und thus continuity and
boundedness. Moreover, the same weak maximum principle argument as in \eqref{weakMax} yields $\hat{P}\ge \Pi$ and thus the upper bound $\Pi  \leq A <+\infty$ for a constant $A$. 

We will show now that $\li \equiv 0$ implies $\Li = \Pi = \pi = 0$. Indeed, with $\li \equiv 0$ it follows readily that $\pi \equiv 0$. By multiplying the equation for $\Li$ by $\beta \Li$, the equation for $\Pi$ by $\alpha \Pi$, and by summing the two equations, we calculate with $\pi \equiv 0$
	\begin{equation*}
		d_{L}\|\nabla \Li\|_{L^2(\Omega)}^2 + \lambda\|\Li\|_{L^2(\Gamma)}^2 + d_P\|\nabla \Pi\|_{L^2(\Omega)}^2 + \int_{\Omega}(\beta \Li- \alpha \Pi)^2\,dx= 0,
	\end{equation*}
which implies $\Li \equiv 0$ and eventually $\Pi \equiv 0$. 
Therefore, whenever a positive mass $M>0$ is considered, 
the corresponding equilibrium state has to satisfy $\li \not\equiv 0$, and consequently $\pi\not\equiv 0$ and $\Pi \not\equiv 0$. 

From the continuity of $\li$, we obtain that $\pi$ is the unique classical solution to 
\begin{equation*}
		\begin{cases}
			-d_{p}\Delta_{\Gamma_2}\pi + \xi\pi  = \sigma \li, &x\in\Gamma_2,\\
			d_p\partial_{\nu_{\Gamma_2}}\pi = 0, &x\in\partial\Gamma_2
		\end{cases}
\end{equation*}
Since $\li$ is nonnegative and not identically zero, we can apply the classical maximum principle to conclude $\pi(x) \geq a>0$ for $x\in\overline{\Gamma_2}$ and a constant $a>0$.

Next, by considering the auxiliary equation 
	\begin{equation*}
		-d_{\ell}\Delta_{\Gamma}\ell^* + (\gamma + \sigma)\ell^* = \lambda \Li|_{\Gamma}, \quad x\in\Gamma,
	\end{equation*}
and by recalling the continuity and nonnegativity of $\Li\not\equiv 0$, the strong maximum applied to the unique classical solution $\ell^*$ implies that $\ell^*(x) \geq a>0$ for all $x\in\Gamma$ and a constant $a>0$. Moreover, by a weak maximum principle argument analog to \eqref{weakMax}, we have that $\ell^*$ is a subsolution to $\li$, i.e. $\li(x) \geq \ell^*(x) \geq a>0$ for all $x\in\Gamma$.

Moreover, we consider the unique classical solutions $L^*$ of the auxiliary system
\begin{equation*}
-d_L\Delta L^* + \beta L^* = 0, \quad x\in\Omega,\qquad
d_L\partial_{\nu} L^* + \lambda L^* = \gamma \li, \quad x\in\Gamma,
\end{equation*}
for which the classical maximum principle and the lower bound $\li(x) \geq a>0$ implies  
$L^*(x) \geq a>0$ for $x\in{\Omega}$ and a constant $a>0$. Furthermore, by the weak maximum principle, 
$L^*$ is a subsolution to $\Li$, i.e. $\Li(x) \geq L^* \geq a>0$ for all $x\in{\Omega}$.

	
Finally, by considering the unique classical solution $P^*$ of the auxiliary system
	\begin{equation*}
			-d_P\Delta P^* + \alpha P^* = \beta \Li, \quad x\in\Omega,\qquad
			d_P\partial_{\nu}P^* = 0, \quad x\in\partial\Gamma,
	\end{equation*}
the weak maximum principle shows $P^*$ to be subsolutions, which is bounded below by a positive constants due to the 
strong maximum principle applied to $P^*$, i.e. $\Pi(x) \geq P^*(x)\geq a >0$ for $x\in{\Omega}$ and a constant $a>0$.
This finishes the proof of the lower and upper bounds. 

	
	To prove the uniqueness of the equilibrium for a given positive mass $M>0$, we suppose two different equilibria $\cc^{(1)}_{\infty} = (\Li^{(1)}, \Pi^{(1)}, \li^{(1)}, \pi^{(1)})$, $\cc_{\infty} = (\Li, \Pi, \li, \pi)$ as constructed above. Then, obviously the entropy-dissipation of the relative entropy between $\cc^{(1)}$ and $\cc_{\infty}$ vanishes, i.e.
$\D(\cc_{\infty}^{(1)}, \cc_{\infty}) = 0$.
	Thanks to \eqref{Lgl_dissipation}, this implies
	\begin{equation*}
		\frac{\Li^{(1)}}{\Li} \equiv \frac{\Pi^{(1)}}{\Pi} \equiv \frac{\li^{(1)}}{\li} \equiv \frac{\pi^{(1)}}{\pi} \equiv k
	\end{equation*}
	for some constant $k\in \mathbb R\backslash\{0\}$. Hence, the conservation law \eqref{Lgl_massinf} implies $\cc_{\infty}^{(1)} \equiv \cc_{\infty}$ provided a fixed positive mass $M>0$. 	
\end{proof}

\begin{remark}
Note that the existence of a nonnegative equilibrium is proved independently of the space dimension. 
The positive lower and upper bounds, however, are based on classical maximum principles arguments. 
Due to the discontinuity of the characteristic function $\chi_{\Gamma_2}$, we do not get classical solutions but only weak solutions with higher regularity (e.g. $\Li \in H^2(\Omega)$, $\Pi\in H^{3/2}(\Omega)$), which restricts our proof to  dimensions $n\le3$. The case of higher spatial dimensions remains open.
\end{remark}

The proof of Theorem \ref{mainresult1} is based on the following crucial Lemma \ref{EEDE_Lgl}, which establishes a so called \emph{entropy entropy-dissipation estimate} and constitutes 
the key idea of the entropy method, which aims to quantify the entropy dissipation in terms of the 
relative entropy towards the equilibrium via a functional inequality independent of the flow of a PDE model, see e.g. \cite{DF06, DF08}.

In order to prove the entropy entropy-dissipation estimate in Lemma \ref{EEDE_Lgl} and in particular as a consequence of 
having to prove an entropy method for the space inhomogeneous equilibria  $\cc_{\infty} = (\Li, \Pi, \li, \pi)$, it will be highly convenient to introduce the following abbreviations, weighted quantities and inequalities: 
\begin{itemize}
	\item Norms: $\|\cdot\|_{\Omega}, \|\cdot\|_{\Gamma}$, $\|\cdot\|_{\Gamma_2}$ are the norms in $L^2(\Omega)$, $L^2(\Gamma)$, $L^2(\Gamma_2)$, respectively;
	\item New variables (weighted deviations around equilibrium values):
	$$
		U = \frac{L - \Li}{\Li},\quad V = \frac{P - \Pi}{\Pi}, \quad u = \frac{\ell - \li}{\li}, \quad v = \frac{p - \pi}{\pi};
	$$
	\item New measures:
	$\qquad\qquad \quad
		d\Li = \Li dx, \quad \quad d\Pi = \Pi dx, 
	$
	$$
		dS_{\Li} = \Li\!\!\mid_{\Gamma} dS, \qquad dS_{\Pi} = \Pi\!\!\mid_{\Gamma} dS, 
	\qquad	d\li = \li dS, \qquad d\pi = \pi dS.
	$$
	\item Weighted averages:\quad
	$
		\oU= \frac{1}{\intO{d\Li}}\intO{Ud\Li}, \quad \oV = \frac{1}{\intO{d\Pi}}\intO{Vd\Pi},
	$
	$$
		\qquad\qquad\ou = \frac{1}{\intG{d\li}}\intG{\ell d\li}, \quad \ov = \frac{1}{\intGG{d\pi}}\intGG{pd\pi}.
	$$
	\item Weighted Poincar\'e Inequalities: \textcolor{black}{The following weighted inequalities hold thanks to the upper and lower bounds of $\Li, \Pi, \li$ and $\pi$ in Lemma \ref{Lgl_equilibrium}}
	\begin{equation}\label{t1}
		\intO{|\nabla U|^2d\Li} \geq P_L\!\intO{|U - \oU|^2d\Li},\ \ \ \intO{|\nabla V|^2d\Pi} \geq P_P\!\intO{|V - \oV|^2d\Pi},
	\end{equation}
	\begin{equation}\label{t2}
		\intG{|\nabla_{\Gamma}u|^2d\li}\geq P_{\ell}\!\intG{|u - \ou|^2d\li}, \quad
		\intGG{|\nabla_{\Gamma_2} v|^2d\pi} \geq P_p\!\intGG{|v - \ov|^2d\pi}.	
	\end{equation}
	\item Weighted Trace Inequalities: \textcolor{black}{Thanks to the lower and upper bounds of $\Li\in C(\overline{\Omega})$ and the usual Trace inequality, we have}
	\begin{eqnarray}\label{t4}
		\intO{|\nabla U|^2d\Li} \geq T_L\!\!\intG{\!|U\!\!\mid_{\Gamma}\! - \oU|^2dS_{\Li}} 
	\end{eqnarray}
\end{itemize}

With respect to the new notations, note that the relative entropy \eqref{Lgl_entropy}, in particular the relative entropy w.r.t. the equilibrium  $\cc_{\infty}$, i.e. $\E(\cc |\cc_{\infty})$ can be rewritten as
\begin{equation}\label{entropyshift}
\E(\cc |\cc_{\infty}) = \E(\cc - \cc_{\infty}|\cc_{\infty}) +M, 
\qquad \text{where}\quad \E(\cc_{\infty}|\cc_{\infty})=M,
\end{equation}
and that 
$\E(\cc - \cc_{\infty}|\cc_{\infty}) = \intO{U^2d\Li} + \intO{V^2d\Pi} + \intG{u^2d\li} + \intGG{v^2d\pi}$.
Moreover, the entropy dissipation law \eqref{Lgl_dissipation} 
rewrite as 
\begin{align}
&\D(\cc - \cc_{\infty}|\cc_{\infty}) = -\frac{d}{dt}\E(\cc - \cc_{\infty}|\cc_{\infty}) \nonumber\\
&\;= 2d_L\intO{|\nabla U|^2d\Li} + 2d_P\intO{|\nabla V|^2d\Pi}+ 2d_{\ell}\intG{|\nabla_{\Gamma} u|^2d\li} + 2d_p\intGG{|\nabla_{\Gamma_2} v|^2d\pi}\nonumber\\
			&\;\;\;\; \;+ \alpha\intO{|U - V|^2d\Pi}  + \beta\intO{|U-V|^2d\Li} + \lambda\intG{|U\!\!\mid_{\Gamma} - u|^2dS_{\Li}}\nonumber\\
			&\;\;\;\;\; + \gamma\intG{|U\!\!\mid_{\Gamma} - u|^2d\li} + \xi\intGG{|V\!\!\mid_{\Gamma} - v|^2d\pi} + \sigma\intGG{|u - v|^2d\li},\label{EntropyDissipationR}
\end{align}
and from \eqref{entropyshift} it follows readily that 
$\D(\cc - \cc_{\infty}|\cc_{\infty})=\D(\cc|\cc_{\infty})$.

\begin{proof}[Proof of Theorem \ref{mainresult1}]\hfill\\
The proof of Theorem \ref{mainresult1} is a direct consequence of the  key functional inequality \eqref{EEDE} in the following  Lemma \ref{EEDE_Lgl}  
and a classical Gronwall argument applied to the entropy dissipation law \eqref{EntropyDissipationR}, which is weakly satisfied (in the sense of being integrated in time)
by the weak global solutions to system \eqref{Lgl_system} constructed in \cite{FRT16}.
\end{proof}

\begin{lemma}[Entropy entropy-dissipation estimate for system \eqref{Lgl_system}]\label{EEDE_Lgl}\hfill\\
	Fix a positive initial mass $M>0$. Then, for any non-negative measurable functions $\cc = (L, P, \ell, p)$ satisfying the mass conservation
	\begin{equation*}
		\int_{\Omega}(L(x) + P(x))dx + \intG \ell(x)dS + \intGG p(x)dS = M,
	\end{equation*}
	the entropy entropy-dissipation estimate
	\begin{equation}\label{EEDE}
		\D(\cc - \cc_{\infty}|\cc_{\infty}) \geq \lambda_0\, \E(\cc - \cc_{\infty}|\cc_{\infty})
	\end{equation}
	holds, where $\cc_{\infty}$ is as in Lemma \ref{Lgl_equilibrium} and the constant $\lambda_0>0$ can be estimated explicitly.
\end{lemma}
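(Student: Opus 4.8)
The plan is to run the entropy method for spatially inhomogeneous equilibria: the diffusion terms in \eqref{EntropyDissipationR} control the deviations from the weighted spatial averages, while the reaction terms, once reduced to the averages, are handled by the finite-dimensional inequality of Lemma \ref{finite_dimension}. The decisive structural input is that the two-sided bounds $0<a\le \Li,\Pi,\li,\pi\le A<\infty$ from Lemma \ref{Lgl_equilibrium} make all the weighted measures $d\Li,d\Pi,d\li,d\pi,dS_{\Li}$ mutually equivalent up to the explicit factor $A/a$, so one may freely pass between the different weighted $L^2$-norms at the cost of explicit constants. This equivalence is what lets the inhomogeneous weights be treated essentially as if they were constant.

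First I would split the relative entropy and its dissipation into a \emph{variance part} and an \emph{average part}. Since $\oU$ is the $d\Li$-weighted mean of $U$, one has $\intO U^2\,d\Li = \intO |U-\oU|^2\,d\Li + \oU^2\intO d\Li$, and analogously for $V,u,v$; hence $\E(\cc-\cc_{\infty}|\cc_{\infty})$ decomposes into the variances $\intO|U-\oU|^2 d\Li+\dots$ and the averages $\oU^2\intO d\Li + \dots$. The variance part is immediately dominated by a fraction of the diffusion terms via the weighted Poincaré inequalities \eqref{t1}, \eqref{t2}, e.g. $2d_L\intO|\nabla U|^2 d\Li \ge 2d_L P_L\intO|U-\oU|^2 d\Li$; because all four diffusion coefficients are strictly positive, the whole variance part is controlled this way.

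Next I would reduce the reaction terms to squared differences of the averages. Applying $|a-b|^2\ge \tfrac12|\bar a-\bar b|^2-2|a-\bar a|^2-2|b-\bar b|^2$ to each reaction integrand and integrating, each reaction term yields the desired average difference $|\oU-\oV|^2$, $|\oU-\ou|^2$ or $|\ou-\ov|^2$ (times the mass of the corresponding weight) minus error terms that are again variances. The genuinely new subtlety is the volume-surface coupling: the $L$-$\ell$ reaction compares the trace $U\!\!\mid_{\Gamma}$ with the surface quantity $u$, so the boundary variance $\intG|U\!\!\mid_{\Gamma}-\oU|^2\,dS_{\Li}$ must be absorbed, and this is exactly what the weighted trace inequality \eqref{t4} supplies, at the cost of a further fraction of the $U$-diffusion. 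Importantly, it suffices to retain only the three edges $P\!-\!L\!-\!\ell\!-\!p$ (rates $\alpha,\beta$; $\gamma,\lambda$; and $\sigma$), which already form a spanning tree on the four species; this avoids needing a trace inequality for $V$ and lets me simply discard the nonnegative $\xi$-term.

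Finally, the average part is closed by the finite-dimensional inequality. The shared mass of $\cc$ and $\cc_{\infty}$ in \eqref{Lgl_massinf} rewrites, via $L-\Li=U\Li$ etc., as the linear constraint $\oU\intO d\Li + \oV\intO d\Pi + \ou\intG d\li + \ov\intGG d\pi = 0$, which is of the form $\sum_i\alpha_i c_i=0$ with strictly positive weights $\alpha_i$. The argument of Lemma \ref{finite_dimension}, applied to the connected spanning subgraph $P\!-\!L\!-\!\ell\!-\!p$, then gives $w_1|\oU-\oV|^2+w_2|\oU-\ou|^2+w_3|\ou-\ov|^2\ge \eta(\oU^2+\oV^2+\ou^2+\ov^2)$ with positive $w_k,\eta$, which bounds the average part from below. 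Collecting the two parts and choosing the splitting fractions of the diffusion terms so that every error variance is genuinely absorbed produces the claimed $\D(\cc-\cc_{\infty}|\cc_{\infty})\ge \lambda_0\,\E(\cc-\cc_{\infty}|\cc_{\infty})$ with explicit $\lambda_0>0$. I expect the main obstacle to be precisely this volume-surface bookkeeping in the third and final steps: arranging that the trace and Poincaré constants together with the measure-equivalence factors $A/a$ from Lemma \ref{Lgl_equilibrium} leave a strictly positive remainder after all absorptions, rather than any single conceptual difficulty.
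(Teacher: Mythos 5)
Your proposal is correct and follows essentially the same route as the paper: decompose the relative entropy into a variance part and an average part, control the variances by the weighted Poincar\'e and trace inequalities, discard the problematic $\xi$-term and work only with the spanning chain $P\!-\!L\!-\!\ell\!-\!p$ (the paper realises this via the triangle-inequality step \eqref{omega}), and close the average part with Lemma \ref{finite_dimension} under the mass constraint. The one adjustment needed is to replace the fixed-constant splitting $|a-b|^2\ge\tfrac12|\bar a-\bar b|^2-2|a-\bar a|^2-2|b-\bar b|^2$ by its $\epsilon$-weighted analogue as in \eqref{Eq4}--\eqref{Eq9}, since with the fixed factor $2$ the error variances need not be absorbable by the diffusion terms for small diffusion coefficients or large reaction rates.
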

\begin{proof}
	Note that $\D(\cc - \cc_{\infty}|\cc_{\infty})=0$ for all constant states satisfying $U = V = u = v$ while $\E(\cc - \cc_{\infty}|\cc_{\infty}) = 0$ if and only if $U = V = u = v = 0$. Hence, the constraint provided by mass conservation law, i.e. 
\begin{equation*}
\int_{\Omega}U(x)d\Li + \int_{\Omega}V(x)d\Pi + \intG u(x)d\li + \intGG v(x)d\pi = 0,
\end{equation*}
plays a crucial role in inequality \eqref{EEDE}, which can not hold otherwise. 

The proof of this lemma is therefore divided into two steps, where the mass conservation law enters the proof in the first step. At first, we remark that the relative entropy enjoys to following additivity property w.r.t. $\overline{\cc} = (\overline{U}, \overline{V}, \overline{u}, \overline{v})$
\begin{equation*}
\E(\cc - \cc_{\infty}|\cc_{\infty}) = \E(\cc - \overline{\cc}|\cc_{\infty}) + \E(\overline{\cc} - \cc_{\infty}|\cc_{\infty})
\end{equation*}
and that the second term on the right hand side is controlled in terms of the entropy dissipation in Step 1, while the first term is controlled in Step 2:
		
	\medskip
	\noindent{\bf Step 1.} First, we prove that there exists an explicit constant $K_0>0$ such that
	\begin{equation}\label{Eq1}
		\D(\overline{\cc} - \cc_{\infty}|\cc_{\infty}) \geq K_0\,\E(\overline{\cc} - \cc_{\infty}|\cc_{\infty})
	\end{equation}
	Indeed, \eqref{Eq1} writes explicitly as
	\begin{multline*}
			\alpha|\oU-\oV|^2\intO{d\Pi} + \beta|\oU - \oV|^2\intO{d\Li} + \lambda|\oU - \ou|^2\intG{dS_{\Li}}\\
			+\gamma|\oU - \ou|^2\intG{d\li} + \xi|\oV - \ov|^2\intGG{d\pi} + \sigma|\ou - \ov|^2\intGG{d\li}\\
			\geq K_0\Bigl(\oU^2\intO{d\Li} + \oV^2\intO{d\Pi} + \ou^2\intG{d\li} + \ov^2\intGG{d\pi}\Bigr)
	\end{multline*}
	under the mass constrain
		$\oU\intO{d\Li} + \oV\intO{d\Pi} + \ou\intG{d\li} + \ov\intGG{d\pi} = 0$ {and where $\alpha, \beta, \lambda, \gamma, \sigma$ and $\xi$ denote the positive reaction rate constants
of the network Fig. \ref{LGLmodel}}.

However, the above inequality is a direct consequence of Lemma \ref{finite_dimension} applied to the vector of averaged concentrations $(\oU, \oV, \ou, \ov)$
after noting that the network of reactions between $L, P, \ell$ and $p$ (see Figure \ref{LGLmodel}) is weakly reversible with one strongly connected component. Thus, the constants $K_0$ 
can be taken as the corresponding constant $\eta>0$ of Lemma \ref{finite_dimension}.
\medskip

\noindent{\bf Step 2.} We introduce the following deviations from the averaged values by $\delta_U = U - \oU, \delta_V = V - \oV, \delta_u = u - \ou$ and $\delta_v = v - \ov$. Note that
	$
		\intO{\delta_Ud\Li} = \intO{\delta_Vd\Pi} = \intG{\delta_ud\li} = \intGG{\delta_vd\pi} = 0.
	$
Moreover, we can rewrite
	\begin{equation}\label{Eq2}
		\begin{aligned}
			\E(\cc-\cc_{\infty}|\cc_{\infty}) = \intO{\delta_U^2d\Li} + \intO{\delta_v^2d\Pi} + \intG{\delta_u^2d\li} + \intGG{\delta_v^2d\pi} + \E(\overline{\cc} - \cc_{\infty}|\cc_{\infty}).
		\end{aligned}
	\end{equation}
	By using the weighted Poincar\'{e} and Trace inequalities  \eqref{t1}--\eqref{t4}, we estimate $\D$ as 
	\begin{equation}\label{Eq3}
		\begin{aligned}
			\D(\cc-\cc_{\infty}|\cc_{\infty})&\geq d_LP_L\intO{\delta_U^2d\Li} + d_PP_P\intO{\delta_V^2d\Pi}+ d_LT_L\intG{\delta_U^2dS_{\Li}} \\
				&\;\;\; + 2d_{\ell}P_{\ell}\intG{\delta_u^2d\li} + 2d_pP_p\intGG{\delta_v^2d\pi}\\
				&\;\;\; + \biggl[\alpha\intO{|U - V|^2d\Pi}  + \beta\intO{|U-V|^2d\Li} + \lambda\intG{|U\!\!\mid_{\Gamma} - u|^2dS_{\Li}}\\
				&\qquad + \gamma\intG{|U\!\!\mid_{\Gamma} - u|^2d\li} + \xi\intGG{|V\!\!\mid_{\Gamma_2} - v|^2d\pi} + \sigma\intGG{|u - v|^2d\li}\biggr]
		\end{aligned}
	\end{equation}
	We denote by $J_i,\, i = 1,2, \ldots, 6$ the last six terms on the right hand side of \eqref{Eq3}. We have
	\begin{equation}\label{Eq4}
		\begin{aligned}
			J_1 &= \alpha\intO{|U - V|^2d\Pi}= \alpha\intO{|\oU - \oV + \delta_U - \delta_V|^2d\Pi}\\
				  &\geq \epsilon_1\alpha\intO{|\oU - \oV|^2d\Pi} - \frac{2\alpha\epsilon_1}{1 - \epsilon_1}\left(\intO{\delta_U^2d\Pi} + \intO{\delta_V^2d\Pi}\right)\\
				  &\geq \epsilon_1\alpha\intO{|\oU - \oV|^2d\Pi} - \frac{2\alpha\epsilon_1}{1 - \epsilon_1}\left(\left\|\frac{\Pi}{\Li}\right\|_{L^{\infty}(\Omega)}\intO{\delta_U^2d\Li} + \intO{\delta_V^2d\Pi}\right)
		\end{aligned}
	\end{equation}
	for all $\epsilon_1\in (0,1)$. Similarly, we get
\begin{align}
		J_2 &\geq \epsilon_2\beta\intO{|\oU - \oV|^2d\Li} - \frac{2\beta\epsilon_2}{1-\epsilon_2}\left(\intO{\delta_U^2d\Li} + \left\|\frac{\Li}{\Pi}\right\|_{L^{\infty}(\Omega)}\intO{\delta_V^2d\Pi}\right),\nonumber\\
		J_3& \geq \epsilon_3\lambda\intG{|\oU - \ou|^2dS_{\Li}} - \frac{2\lambda\epsilon_3}{1-\epsilon_3}\left(\intG{\left(\delta_U\!\!\mid_{\Gamma}\right)^2dS_{\Li}} + \left\|\frac{\Li}{\li}\right\|_{L^{\infty}(\Gamma)}\intG{\delta_u^2d\li}\right),\nonumber \\
		J_4 &\geq \epsilon_4\gamma\intG{|\oU - \ou|^2d\li} - \frac{2\gamma\epsilon_4}{1 - \epsilon_4}\left(\left\|\frac{\li}{\Li}\right\|_{L^{\infty}(\Gamma)}\intG{\left(\delta_U\!\!\mid_{\Gamma}\right)^2dS_{\Li}} + \intG{\delta_u^2d\li}\right),\nonumber \\
		J_6 &\geq \epsilon_6\sigma\intGG{|\ou - \ov|^2d\li} - \frac{2\sigma\epsilon_6}{1 - \epsilon_6}\left(\intGG{\delta_u^2d\li} + \left\|\frac{\li}{\pi}\right\|_{L^{\infty}(\Gamma_2)}\intGG{\delta_v^2d\pi}\right),\label{Eq9}
\end{align}
with $\epsilon_2,\epsilon_3,\epsilon_4, \epsilon_6 \in (0,1)$.
For $J_5$, the lack of continuity of $\Pi$ at the boundary $\Gamma_2$ prevents a similar estimate as above, since it is unclear how to control the term $\|\frac{\pi}{\Pi}\|_{L^{\infty}(\Gamma_2)}$. However, the weak reversibility of 
system \eqref{Lgl_system} allows first to estimate $J_5 \geq 0$ and then use the triangle inequality to have
	\begin{multline}\label{omega}
			\frac 12\left(\epsilon_2\beta\intO{|\oU - \oV|^2d\Li} +  \epsilon_3\lambda\intG{|\oU - \ou|^2dS_{\Li}} + \epsilon_6\sigma\intGG{|\ou - \ov|^2d\li}\right)\\
			\geq \frac{1}{6}\min\left\{\epsilon_2\beta\intO d\Li; \epsilon_3\lambda\intG dS_{\Li} ; \epsilon_6\sigma\intGG d\li \right\}|\overline{V} - \overline{v}|^2
			=: \omega |\overline{V} - \overline{v}|^2.
	\end{multline}

	By combining \eqref{Eq3}--\eqref{Eq9} and by choosing $\epsilon_1, \ldots, \epsilon_6$ small enough (for instance in order to ensure that for some $\eta_1 >0$
$$
d_LP_L - \frac{2\alpha\epsilon_1}{1 - \epsilon_1}\left\|\frac{\Pi}{\Li}\right\|_{L^{\infty}(\Omega)}- \frac{2\beta\epsilon_2}{1-\epsilon_2} \ge \eta_1 >0
$$
with $\bigl\|\frac{\Pi}{\Li}\bigr\|_{L^{\infty}(\Omega)}\le \frac{A}{a}$ by Lemma \ref{Lgl_equilibrium}), 
we can estimate $\D(\cc - \cc_{\infty}|\cc_{\infty})$ below as
	\begin{equation}\label{Eq10}
		\begin{aligned}
			\D(\cc-\cc_{\infty}|\cc_{\infty}) 
				&\geq \frac{1}{2}\min\{\epsilon_1,\epsilon_2, \epsilon_3, \epsilon_4, 2\omega, \epsilon_6\}\D(\overline{\cc}-\cc_{\infty}|\cc_{\infty})\\
				&\ \ \ + \eta_1\!\intO{\delta_U^2d\Li} + \eta_2\!\intO{\delta_V^2d\Pi} + \eta_3\!\intG{\delta_u^2d\li} + \eta_4\!\intGG{\delta_v^2d\pi}
		\end{aligned}
	\end{equation}
	where $\omega$ is defined in \eqref{omega}. Hence, by using \eqref{Eq1} and \eqref{Eq2}, we have
\begin{align*}
			\D(\cc-\cc_{\infty}|\cc_{\infty}) &\geq \frac 12 K_0\min\{\epsilon_1,\epsilon_2, \epsilon_3, \epsilon_4, 2\omega, \epsilon_6\}\E(\overline{\cc}-\cc_{\infty}|\cc_{\infty})\\
				 &\quad+\min_{i=1..4}\{\eta_i\}\left(\intO{\delta_U^2d\Li}+\intO{\delta_V^2d\Pi}+\intG{\delta_u^2d\li}+\intGG{\delta_v^2d\pi}\right)\\
				 &\geq {\lambda_0}\,\mathcal E(\cc-\cc_{\infty}|\cc_{\infty})
\end{align*}
	with $\lambda_0 = \frac 12 \min\{2K_0\epsilon_1,2K_0\epsilon_2, 2K_0\epsilon_3, 2K_0\epsilon_4, 4K_0\omega, 2K_0\epsilon_6, \eta_1, \eta_2, \eta_3, \eta_4\}$.
\end{proof}
\section*{Proof of Theorem \ref{mainresult2}}
In this section, we denote by $\uu = (u_0,u_1,\ldots, u_7)$ and $\vv = (w_0,w_1,\ldots,w_7)$. Moreover, we define 
(in the spirit of the relative entropy \eqref{RelativeEntropy_Recall} of first-order reaction networks)
the relative entropy functional associated to \eqref{JAK_system}: 
\begin{equation}\label{RE} 
   \E(\uu|\vv) = \int\limits_{\Ocyt}\left(\frac{|u_0|^2}{w_0} + \frac{|u_1|^2}{w_1}\right)dx + |\Onuc|\sum_{i=2}^{7}\frac{|u_i|^2}{w_i},
\end{equation}
which dissipates (analog to the entropy dissipation \eqref{EntropyDissipation}) due to the following entropy dissipation functional:
    \begin{align}
      	\D&(\uu|\vv) = -\frac{d}{dt} \E(\uu|\vv) 
	= D\int_{\Ocyt}w_0\Bigl|\nabla\frac{u_0}{w_0}\Bigr|^2dx + D\int_{\Ocyt}w_1\Bigl|\nabla\frac{u_1}{w_1}\Bigr|^2dx\nonumber\\
	    & + \ract\, p_{JAK}\!\!\int\limits_{\partial\Ocyt}\!w_0\biggl[\frac{u_0}{w_0}\biggr\vert_{\partial\Ocyt}\! \!- \frac{u_1}{w_1}\biggr\vert_{\partial\Ocyt}\biggr]^2\!dS + \rimpp\!\!\int\limits_{\partial\Onuc}\!w_1\biggl[\frac{u_1}{w_1}\biggr\vert_{\partial\Onuc} \!\!- \frac{u_3}{w_3}\biggr]^2\!dS\nonumber\\
	    & + \rimp\int_{\partial\Onuc}w_0\biggl[\frac{u_0}{w_0}\biggr\vert_{\partial\Onuc} \!- \frac{u_2}{w_2}\biggr]^2dS + \rexp w_2\int_{\partial\Onuc}\biggl[\frac{u_0}{w_0}\biggr\vert_{\partial\Onuc} \!- \frac{u_2}{w_2}\biggr]^2dS\nonumber\\
	    & + \rdelay\,w_7\biggl[\frac{u_7}{w_7} - \frac{u_2}{w_2}\biggr]^2 + \sum_{i=3}^{6}\rdelay\,w_{i}\biggl[\frac{u_i}{w_{i}} - \frac{u_{i+1}}{w_{i+1}}\biggr]^2.\label{EP}
    \end{align}

\begin{lemma}[Existence of a unique positive equilibrium of \eqref{JAK_system}]\label{STAT_equilibrium}\hfill\\
	For any positive initial mass $M>0$, system \eqref{JAK_system} possesses a unique equilibrium $\uu_{\infty} = (\ukhong,\ldots, \ubay)$ satisfying the mass conservation \eqref{MassInf}, i.e.
	\begin{equation*} 
	  \int_{\Ocyt}(\ukhong(x)+\umot(x))dx + |\Onuc|\sum_{i=2}^{7}u_{i,\infty} = M>0.
	\end{equation*}
	Moreover, $\uhai, \ldots, \ubay$ are positive and $\ukhong, \umot\in C(\overline{\Omega}_{\mathrm{cyt}})\cap C^2(\Ocyt)$ satisfy
	\begin{equation*}
		0< b\leq \ukhong(x), \umot(x) \leq B <+\infty, \qquad\text{ for all } x\in\Ocyt
	\end{equation*}
	for some constants $0<b\le B\le+\infty$.	
\end{lemma}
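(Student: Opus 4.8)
The plan is to mirror the three-stage strategy of Lemma \ref{Lgl_equilibrium}: first produce a nonnegative equilibrium by a Schauder fixed-point argument on a decoupled linear system, then upgrade it to a strictly positive, two-sided bounded equilibrium via elliptic regularity and maximum principles, and finally deduce uniqueness from the vanishing of the entropy dissipation \eqref{EP}. The key initial simplification is that, setting the time derivatives in \eqref{ODEs} to zero, the six ODEs become algebraic: the chain $u_{i,\infty}=u_{i-1,\infty}$, $i=4,\dots,7$, forces $\uba=\ubon=\unam=\usau=\ubay$, while the $u_2$- and $u_3$-equations express $\uhai$ and $\uba$ as explicit nonnegative multiples of the boundary integrals $\int_{\partial\Onuc}\ukhong\,dS$ and $\int_{\partial\Onuc}\umot\,dS$. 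After substitution, the equilibrium reduces to two \emph{harmonic} functions $\ukhong,\umot$ coupled through (nonlocal) Robin/Neumann data on the disjoint smooth surfaces $\partial\Ocyt$ and $\partial\Onuc$.

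For existence I would iterate on the cone $\mathcal Y=\{(U,V,a_2,\dots,a_7)\in H^1(\Ocyt)^2\times\R^6:\ \text{all components}\ge0\}$: given the previous iterate $(\tilde u_0,\dots,\tilde u_7)$, the new $\ukhong$ solves the coercive problem $-D\Delta\ukhong=0$ with $D\partial_{n_1}\ukhong+\tfrac{\ract p_{JAK}}{|\partial\Ocyt|}\ukhong=0$ on $\partial\Ocyt$ and $D\partial_{n_2}\ukhong+\tfrac{\rimp}{|\partial\Onuc|}\ukhong=\tfrac{\rexp}{|\partial\Onuc|}\tilde u_2$ on $\partial\Onuc$; the new $\umot$ solves the analogous problem with Neumann source $\tfrac{\ract p_{JAK}}{|\partial\Ocyt|}\tilde u_0$ on $\partial\Ocyt$ and Robin loss $\tfrac{\rimpp}{|\partial\Onuc|}\umot$ on $\partial\Onuc$; and the ODE components are recomputed from the boundary integrals above. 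The positive Robin coefficients make each problem coercive and uniquely solvable, a weak maximum principle exactly as in \eqref{weakMax} keeps $\mathcal Y$ invariant, and the standard elliptic a priori estimate renders the map $\mathfrak T$ compact, so Schauder's theorem yields a fixed point as in Lemma \ref{Lgl_equilibrium}. Since the fixed-point system is linear and homogeneous its nonnegative solutions form a ray, and I would rescale the nontrivial one to satisfy the prescribed mass \eqref{MassInf}.

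For regularity and bounds, harmonicity gives $\ukhong,\umot\in C^2(\Ocyt)$ at once, and — genuinely simpler than in Lemma \ref{Lgl_equilibrium}, where the discontinuous $\chi_{\Gamma_2}$ obstructed regularity — all boundary coefficients here are constant on the smooth pieces $\partial\Ocyt,\partial\Onuc$, so elliptic regularity yields $\ukhong,\umot\in C(\overline{\Omega}_{\mathrm{cyt}})$ and hence the finite upper bound $B$ by compactness. To rule out degeneracy I would show that one vanishing component collapses the whole equilibrium: if $\ukhong\equiv0$ then testing the $\umot$-equation against $\umot$ gives $D\|\nabla\umot\|_{L^2(\Ocyt)}^2+\tfrac{\rimpp}{|\partial\Onuc|}\|\umot\|_{L^2(\partial\Onuc)}^2=0$, so $\umot\equiv0$ and all ODE values vanish, contradicting $M>0$. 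Thus $\ukhong\not\equiv0$; the strong maximum principle gives $\ukhong>0$ in $\Ocyt$, and Hopf's boundary-point lemma excludes boundary zeros (at a putative zero the Robin/Neumann condition forces a nonnegative outward normal derivative, contradicting its strict negativity in Hopf's lemma), so $\ukhong\ge b>0$ on $\overline{\Omega}_{\mathrm{cyt}}$. The positive phosphorylation flux $\tfrac{\ract p_{JAK}}{|\partial\Ocyt|}\ukhong>0$ then forces $\umot\not\equiv0$, whence the same argument gives $\umot\ge b>0$, and the boundary integrals make $\uhai,\uba,\dots,\ubay>0$, so positivity propagates along the weakly reversible, single strongly connected network of Fig.\ \ref{JSFig}.

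Uniqueness follows as in Lemma \ref{Lgl_equilibrium}: for two equilibria the relative entropy \eqref{RE} is stationary, so $\D=0$ in \eqref{EP}; the vanishing gradient terms make each ratio $\ukhong^{(1)}/\ukhong$ and $\umot^{(1)}/\umot$ constant on $\Ocyt$, the vanishing reaction/boundary terms equate neighbouring ratios across every edge, and connectivity forces all ratios to a single constant $k>0$, which the common mass constraint \eqref{MassInf} pins to $k=1$. I expect the main obstacle to be the first stage: unlike the purely PDE Lgl system, one must thread the finite-dimensional ODE equilibrium values into the fixed-point map through the \emph{nonlocal} boundary integrals while keeping each step a local coercive elliptic problem, and then check that positivity genuinely propagates through the whole mixed ODE/PDE network so that no species degenerates for positive mass. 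By contrast, the two-sided bounds are comparatively clean here, since the equilibria are harmonic with smooth boundary data and Hopf's lemma applies directly, without the explicit subsolutions used for the Lgl system.
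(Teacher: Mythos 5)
Your proposal follows essentially the same route as the paper's proof: reduce the six ODEs to algebraic expressions in the boundary integrals of $u_{0,\infty}$ and $u_{1,\infty}$, produce a nonnegative equilibrium of the resulting nonlocal elliptic pair via a Schauder fixed-point argument, upgrade to continuity and two-sided positive bounds by elliptic regularity and maximum-principle arguments, and deduce uniqueness from the vanishing of the entropy dissipation \eqref{EP} combined with the mass constraint \eqref{MassInf}. The only (harmless) deviations are that you iterate on the ODE components directly instead of substituting the nonlocal boundary integrals into the auxiliary elliptic system, and that you establish strict positivity of $u_{0,\infty}$ first via Hopf's boundary-point lemma, whereas the paper first obtains $u_{1,\infty}>0$ and then feeds $\int_{\partial\Omega_{\mathrm{nuc}}}u_{1,\infty}\,dS>0$ back into the boundary condition for $u_{0,\infty}$.
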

\begin{proof}
	From \eqref{ODEs}, we easily see that 
\begin{align}\label{y1}
	\uba &= \ubon = \unam = \usau = \ubay = \frac{\rimpp}{\rdelay|\partial\Onuc|}\int_{\partial\Onuc}\umot(y) dS,\\
\uhai &= \frac{\rdelay}{\rexp}\ubay + \frac{\rimp}{\rexp|\partial\Onuc|}\int_{\partial\Onuc}\ukhong(y)dS\nonumber\\
&= \frac{\rimpp}{\rexp|\partial\Onuc|}\int_{\partial\Onuc}\umot(y)dS +  \frac{\rimp}{\rexp|\partial\Onuc|}\int_{\partial\Onuc}\ukhong(y)dS.\label{y2}
\end{align}
It thus remains to solve the following non-local elliptic system for $\ukhong$ and $\umot$,
	\begin{subequations}
		\begin{equation}\label{equi_0}
			\begin{cases}
				D\Delta\ukhong(x) = 0, &\quad x\in\Ocyt,\\
				D\partial_{n_1}\ukhong(y) = -\frac{\ract}{|\partial\Ocyt|}p_{JAK}\ukhong(y), &\quad y\in\partial\Ocyt,\\
				D\partial_{n_2}\ukhong(y)=-\frac{\rimp}{|\partial\Onuc|}\ukhong(y) \\ \quad  +\frac{1}{|\partial\Onuc|}\biggl(\frac{\rimpp}{|\partial\Onuc|}\int\limits_{\partial\Onuc}\umot\, dS +  \frac{\rimp}{|\partial\Onuc|}\int\limits_{\partial\Onuc}\ukhong\, dS\biggr), &\quad y\in\partial\Onuc,
			\end{cases}
		\end{equation}
		\begin{equation}\label{equi_1}
			\begin{cases}
				D\Delta\umot(x) = 0, &x\in\Ocyt,\\
				D\partial_{n_1}\umot(y) = \frac{\ract}{|\partial\Ocyt|}p_{JAK}\ukhong(y),&y\in\partial\Ocyt,\\
				D\partial_{n_2}\umot(y) = -\frac{\rimp2}{|\partial\Onuc|}\umot(y), &y\in\partial\Onuc.
			\end{cases}
		\end{equation}
	\end{subequations}
	subject to the constraint, which follows from the mass conservation, \eqref{y1} and \eqref{y2},
	\begin{multline}\label{reduce_mass}
		\int_{\Ocyt}(\ukhong + \umot)dx + \frac{\rimp}{\rexp |\partial\Onuc|}\int_{\partial\Onuc}\ukhong dS \\+  \left(5\frac{\rimpp}{\rdelay|\partial\Onuc|}\frac{\rimpp}{\rexp |\partial\Onuc|}\right)\int_{\partial\Onuc}\umot dS = M.
	\end{multline}
	By considering an auxiliary system as follows 
	\begin{equation}\label{JAK_auxiliary}
		\begin{cases}
			D\Delta u_0 = 0, &x\in\Ocyt,\\
			D\partial_{n_1}u_0 + \frac{\ract}{|\partial\Ocyt|}p_{JAK}u_0 = 0, &y\in \partial\Ocyt,\\
			D\partial_{n_2}u_0+\frac{\rimp}{|\partial\Onuc|}u_0 =\frac{1}{|\partial\Onuc|}\Bigl(\frac{\rimpp}{|\partial\Onuc|}\int\limits_{\partial\Onuc}\!\!\!\widehat{u_1}\, dS +  \frac{\rimp}{|\partial\Onuc|}\int\limits_{\partial\Onuc}\!\!\!\widehat{u_0}\, dS\Bigr), &y\in\partial\Onuc,\\
			D\Delta u_1 = 0, &x\in\Ocyt,\\
			D\partial_{n_1}u_1 = \frac{\ract}{|\partial\Ocyt|}p_{JAK}\widehat{u_0}(y),&y\in\partial\Ocyt,\\
			D\partial_{n_2}u_1 +\frac{\rimp2}{|\partial\Onuc|}u_1 = 0, &y\in\partial\Onuc
		\end{cases}
	\end{equation}
	we can use similar arguments in Lemma \ref{Lgl_equilibrium}, namely define for non-negative $(\widehat{u_0}, \widehat{u_1})$ an operator $\mathfrak L: (\widehat{u_0}, \widehat{u_1}) \mapsto (u_0, u_1)$ as the solution to \eqref{JAK_auxiliary}. Then, the existence of a nonnegative weak solution $(\ukhong, \umot)\in H^1(\Ocyt)\times H^1(\Ocyt)$ to \eqref{equi_0}-\eqref{equi_1} follows from the Schauder fixed point theorem applied to $\mathfrak L$. In return, this implies nonnegative equilibria $(\ukhong, \ldots, \ubay)$ to system \eqref{JAK_system}.
	
	By applying standard bootstrap arguments to \eqref{equi_0} and \eqref{equi_1}, we obtain that $(\ukhong, \umot)$ is in fact a classical solution, namely $\ukhong, \umot \in C(\overline{\Omega}_{\mathrm{cyt}})\cap C^2(\Ocyt)$. Hence, $\ukhong$ and $\umot$ are uniformly bounded above by a positive constant thanks to the compactness of $\overline{\Omega}_{\mathrm{cyt}}$. We now prove the strict positivity of $\ukhong$ and $\umot$. First we show that $\ukhong$ is not identical to zero on $\partial\Ocyt$. Indeed, assume the reserve is true, we then obtain from \eqref{equi_1} that $\umot\equiv 0$. It follows then from \eqref{equi_0} and the strong maximum principle that $\ukhong\equiv 0$. This violates the mass conservation \eqref{reduce_mass}. Therefore, $\ukhong\geq 0$ is not identical to zero on $\partial\Ocyt$ and we get from \eqref{equi_1} and the maximum principle that $\umot(x)>0$ for all $x\in\overline{\Omega}_{\mathrm{cyt}}$ which leads to a strictly positive lower bound of $\umot$. The strict positivity of $\ukhong$ also follows from $\ukhong \in C(\overline{\Omega}_{\mathrm{cyt}})$ and strong maximum principle since the second boundary condition in \eqref{equi_0} implies
	\begin{equation*}
		D\partial_n\ukhong(y) + \frac{\rimp}{|\partial\Onuc|}\ukhong(y) \geq \frac{\rimpp}{|\partial\Onuc|^2}\int_{\partial\Onuc}\umot dS
	\end{equation*}
	and $\umot$ is strictly positive.
%
	The uniqueness of the equilibrium can be proved similar to Lemma \ref{Lgl_equilibrium} thanks to the entropy structures \eqref{RE} and \eqref{EP}. We omit here the proof.
\end{proof}

As in the previous section, we 
introduce the following short notations
\begin{itemize}
  \item {New variables:}\quad
    $v_i = \frac{u_i - u_{i,\infty}}{u_{i,\infty}}, \ \text{ for } i=0,1,\ldots,7$.
  \item {New measures:}\quad
    $d\ukhong = \ukhong dx,\quad d\umot = \umot dx, \quad d\sigma_{\ukhong} = \ukhong\!\!\mid_{\partial\Ocyt}dS$,
  \begin{equation*}
d\sigma_{\umot} = \umot\!\!\mid_{\partial\Ocyt}dS, \quad
dS_{\ukhong} = \ukhong\!\!\mid_{\partial\Onuc}dS,\quad dS_{\umot} = \umot\!\!\mid_{\partial\Onuc}dS.
  \end{equation*}
  \item {New parameters:}\
    $\alpha = \ract, \ \beta = p_{JAK},\  \gamma = \rimp, \quad \sigma = \rimpp, \ \kappa = \rexp, \ \xi = \rdelay$.
  \item {Weighted averages:}
  \begin{equation*}
    \overline{v}_0 = \frac{1}{\int_{\Onuc}d\ukhong}\int_{\Onuc}v_0(t,x)d\ukhong,\qquad \overline{v}_1 = \frac{1}{\int_{\Onuc}d\umot}\int_{\Onuc}v_1(t,x)d\umot.
  \end{equation*}
\end{itemize}
Then, analog to \eqref{entropyshift}--\eqref{EntropyDissipationR}, we can rewrite the relative entropy 
\begin{equation}\label{reRE} 
  \E(\uu-\uinf|\uinf) = \int_{\Ocyt}|v_0|^2d\ukhong + \int_{\Ocyt}|v_1|^2d\umot + |\Onuc|\sum_{i=2}^{7}u_{i,\infty}|v_i|^2,
\end{equation}
and its entropy dissipation functional $\D(\uu - \uinf|\uinf) = -\frac{d}{dt} \E(\uu-\uinf|\uinf)$ as 
  \begin{align}
  &\D(\uu - \uinf|\uinf) 
        = D\int_{\Ocyt}\left|\nabla v_0\right|^2d\ukhong + D\int_{\Ocyt}\left|\nabla v_1\right|^2d\umot
        + \sum_{i=3}^{6}\xi u_{i,\infty}\left[v_i - v_{i+1}\right]^2 \nonumber\\
	    & + \alpha\beta\int_{\partial\Ocyt}\left[v_0\!\!\mid_{\partial\Ocyt} - v_1\!\!\mid_{\partial\Ocyt}\right]^2d\sigma_{\ukhong}+\sigma\int_{\partial\Onuc}\left[v_1\!\!\mid_{\partial\Onuc} - v_3\right]^2dS_{\umot}\nonumber\\
	    & + \gamma\int_{\partial\Onuc}\left[v_0\!\!\mid_{\partial\Onuc} - v_2\right]^2dS_{\ukhong} + \kappa\uhai\int_{\partial\Onuc}[v_0\!\!\mid_{\partial\Onuc} - v_2]^2dS + \xi\ubay[v_7 - v_2]^2.\label{reEP}
    \end{align}
Finally, the mass conservation law rewrite as
\begin{equation}\label{reMassConservation}
	\overline{v}_0\int_{\Ocyt}d\ukhong + \overline{v}_1\int_{\Ocyt}d\umot + |\Onuc|\sum_{i=2}^{7}u_{i,\infty}v_i = 0.
\end{equation}

\begin{proof}[Proof of Theorem \ref{mainresult2}]\hfill\\
The proof of Theorem \ref{mainresult2} is a direct consequence of the functional inequality \eqref{desired_JAK} in the following  Lemma \ref{EEDE_JAK}  
and a Gronwall argument applied to the entropy dissipation law \eqref{reEP}, which is weakly satisfied by the global solutions constructed in \cite{FNR13}.
\end{proof}

\begin{lemma}[Entropy entropy-dissipation estimate for system \eqref{JAK_system}] \label{EEDE_JAK}\hfill\\
Fix a positive initial mass $M>0$. Then, for any measurable state $\uu = (u_0, \ldots, u_7)$ satisfying the mass conservation
	$\int_{\Ocyt}(u_0(x) + u_1(x))dx + |\Onuc|\sum_{i=2}^{7}u_i = M$
the entropy entropy-dissipation inequality
\begin{equation}\label{desired_JAK}
	\D(\uu - \uinf|\uinf) \geq \lambda_1\,\E(\uu - \uinf|\uinf)
\end{equation}
holds where $\uinf$ is as in Lemma \ref{STAT_equilibrium} and the constant $\lambda_1>0$ can be estimated explicitly.
\end{lemma}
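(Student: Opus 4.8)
The plan is to mirror closely the two–step structure of the proof of Lemma \ref{EEDE_Lgl}, using the additivity of the relative entropy with respect to the averaged state $\overline{\uu}=(\overline{v}_0,\overline{v}_1,v_2,\ldots,v_7)$,
\[
\E(\uu-\uinf|\uinf)=\E(\uu-\overline{\uu}|\uinf)+\E(\overline{\uu}-\uinf|\uinf),
\]
where only $v_0,v_1$ carry spatial dependence and are replaced by their $\ukhong$- resp.\ $\umot$-weighted averages $\overline{v}_0,\overline{v}_1$, while $v_2,\ldots,v_7$ are already spatially constant. The second (finite-dimensional, purely ODE-type) term will be bounded below using the reactive part of $\D$ through Lemma \ref{finite_dimension}, and the first (deviation) term using the diffusive part of $\D$. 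The mass conservation \eqref{reMassConservation} enters precisely in the finite-dimensional step, as the linear constraint $\sum\alpha_i c_i=0$ required by Lemma \ref{finite_dimension} with positive mass weights $\alpha_i$.

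In Step~1 I first observe that the network of Fig.~\ref{JSFig} is weakly reversible and consists of a single strongly connected component, since the directed cycle $u_0\to u_1\to u_3\to u_4\to u_5\to u_6\to u_7\to u_2\to u_0$ already passes through all eight species (the remaining edge $u_0\to u_2$ closing against $u_2\to u_0$). Dropping the nonnegative gradient terms from \eqref{reEP} and replacing $v_0,v_1$ by $\overline{v}_0,\overline{v}_1$ produces a finite-dimensional quadratic form in $(\overline{v}_0,\overline{v}_1,v_2,\ldots,v_7)$ whose coefficients are products of the positive reaction rates with the positive weights $\int_{\partial\Ocyt}d\sigma_{\ukhong}$, $\int_{\partial\Onuc}dS_{\ukhong}$, $|\partial\Onuc|$, $\int_{\partial\Onuc}dS_{\umot}$ and $|\Onuc|u_{i,\infty}$. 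Being exactly of the type $\sum_{i<j}(a_{ij}+a_{ji})(c_i-c_j)^2$ with all edge coefficients positive along the (weakly reversible) network, Lemma \ref{finite_dimension} applied under \eqref{reMassConservation} yields $\D(\overline{\uu}-\uinf|\uinf)\geq K_1\,\E(\overline{\uu}-\uinf|\uinf)$ for an explicit $K_1>0$.

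In Step~2 I write $v_0=\overline{v}_0+\delta_0$, $v_1=\overline{v}_1+\delta_1$ with $\int_{\Ocyt}\delta_0\,d\ukhong=\int_{\Ocyt}\delta_1\,d\umot=0$. The positive bounds $b\leq\ukhong,\umot\leq B$ from Lemma \ref{STAT_equilibrium} yield weighted Poincar\'e inequalities controlling $\int_{\Ocyt}\delta_0^2\,d\ukhong$, $\int_{\Ocyt}\delta_1^2\,d\umot$ by the diffusion terms of \eqref{reEP}, as well as weighted trace inequalities controlling the boundary deviations of $\delta_0,\delta_1$ on \emph{both} interfaces $\partial\Ocyt$ and $\partial\Onuc$ by the same diffusion terms. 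In each reactive term of \eqref{reEP} I then substitute $v_k|_{\partial}=\overline{v}_k+\delta_k|_{\partial}$ and apply the same Young-type splitting used for $J_1,\ldots,J_6$ in the proof of Lemma \ref{EEDE_Lgl}: the averaged part reproduces (up to a small factor $\epsilon$) the corresponding term of Step~1, while the trace-deviation remainder is absorbed by the diffusion terms via the trace inequalities. Where a deviation is weighted by the "wrong" equilibrium (e.g.\ $\delta_1$ appearing against $d\sigma_{\ukhong}$, or against the plain measure $dS$ in the $\kappa\uhai$-term) I convert using the bounded ratios $\|\ukhong/\umot\|_{\infty}$ and $1/b$. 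Choosing $\epsilon$ small enough that the surviving coefficients of $\int\delta_0^2\,d\ukhong$ and $\int\delta_1^2\,d\umot$ remain positive, and combining with Step~1 and the additivity of $\E$, gives \eqref{desired_JAK} with an explicit $\lambda_1>0$.

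The main work is the bookkeeping of Step~2: checking that, after splitting every coupling term, all trace-deviation remainders can be charged to the two diffusion integrals with a uniform positive surplus, and that $\epsilon$ can be fixed so that $\lambda_1$ stays strictly positive and explicitly computable. Compared to the Lgl system, the genuinely new features are the two distinct interfaces $\partial\Ocyt,\partial\Onuc$ (forcing trace inequalities on both) and the PDE/ODE coupling between the boundary traces of $v_0,v_1$ and the pointwise ODE values $v_2,v_3$. Crucially, the obstruction met for $J_5$ in Lemma \ref{EEDE_Lgl}, caused by the discontinuity of $\Pi$ across $\Gamma_2$, does \emph{not} arise here: by Lemma \ref{STAT_equilibrium} both $\ukhong$ and $\umot$ belong to $C(\overline{\Omega}_{\mathrm{cyt}})$ with strictly positive bounds, so every weight ratio in the conversions above is finite and no weak-reversibility detour is needed.
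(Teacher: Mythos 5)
Your proposal follows essentially the same route as the paper: the additivity decomposition $\E(\uu-\uinf|\uinf)=\E(\uu-\overline{\uu}|\uinf)+\E(\overline{\uu}-\uinf|\uinf)$, Lemma \ref{finite_dimension} under the mass constraint \eqref{reMassConservation} for the averaged (finite-dimensional) part, and weighted Poincar\'e plus trace inequalities on both interfaces to absorb the deviations into the diffusion terms — the paper phrases the absorption as a triangle-inequality lower bound $\tfrac12\D(\uu-\uinf|\uinf)\ge L_2\,\D(\overline{\uu}-\uinf|\uinf)$ rather than term-by-term Young splitting, but this is the same estimate. Your observation that the $J_5$-type obstruction from the Lgl system does not occur here, since $\ukhong,\umot\in C(\overline{\Omega}_{\mathrm{cyt}})$ with positive bounds, is also correct and consistent with the paper.
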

\begin{proof}
	The proof of this lemma is similar to that of Lemma \ref{EEDE_Lgl}. First, it is straightforward to verify that the relative entropy satisfies to following additivity property:
	\begin{equation}\label{additive}
		\E(\uu - \uinf|\uinf) = \E(\uu - \overline{\uu}|\uinf) + \E(\overline{\uu} - \uinf|\uinf)
	\end{equation}
	where
	\begin{eqnarray*}
		&\E(\uu - \overline{\uu}|\uinf) = \int_{\Ocyt}|v_0 - \overline{v}_0|^2d\ukhong + \int_{\Ocyt}|v_1 - \overline{v}_1|^2d\umot,\\
		&\E(\overline{\uu} - \uinf|\uinf) = \overline{v}_0^2\int_{\Ocyt}d\ukhong + \overline{v}_1^2\int_{\Ocyt}d\umot + |\Onuc|\sum_{i=2}^{7}u_{i,\infty}|v_i|^2.
	\end{eqnarray*}
	
	\noindent{\bf Step 1.} Thanks to the mass conservation law \eqref{reMassConservation}, it follows from Lemma \ref{finite_dimension} and the weak reversibility of the reaction network Fig. \ref{JSFig} that 	for an explicit constant $L_0>0$
	\begin{equation}\label{e3}
		\D(\overline{\uu} - \uinf|\uinf) \geq L_0\,\E(\overline{\uu} - \uinf|\uinf).
	\end{equation}

	\noindent{\bf Step 2.} The term $\E(\uu - \overline{\uu}|\uinf)$ can be controlled in terms of the entropy dissipation by using the weighted Poincar\'{e} inequalities as follow
	\begin{equation}\label{e4}
		\begin{aligned}
			\frac 12 \D(&\uu-\uinf|\uinf) \geq D\int_{\Ocyt}|\nabla v_0|^2d\ukhong + D\int_{\Ocyt}|\nabla v_1|^2d\umot\\
			&\geq L_1\Bigl(\int_{\Ocyt}|v_0 - \overline{v}_0|^2d\ukhong + \int_{\Ocyt}|v_1 - \overline{v}_1|^2d\umot\Bigr) = L_1\,\E(\uu - \overline{\uu}|\uinf),
		\end{aligned}
	\end{equation}
	where $L_1=D \min\{P_{\ukhong},P_{\umot}\}$ depends on $D$ and the corresponding weighted Poincar\'e constants.
Next, we prove for some constant $L_2>0$ that
	\begin{equation}\label{e5}
		\frac 12\D(\uu - \uinf|\uinf) \geq L_2\,\D(\overline{\uu} - \uinf|\uinf),
	\end{equation}
which yields the desired estimate \eqref{desired_JAK} with $\lambda_1 = \min\{L_1, L_0L_2\}$ by combining \eqref{additive}, \eqref{e3}, \eqref{e4} and \eqref{e5}. To prove \eqref{e5}, we will use the $\mu$-weighted trace inequalities 
	\begin{equation*}
		\int_{\Ocyt}|\nabla f|^2d\mu \geq T_{\sigma_\mu}\int_{\partial\Ocyt}|f - \overline{f}|^2\,d\sigma_{\mu}, \quad  \int_{\Onuc}|\nabla f|^2d\mu \geq T_{S_\mu}\int_{\partial\Onuc}|f - \overline{f}|^2\,dS_{\mu}, 
	\end{equation*}
	where $\overline{f} = \frac{1}{\int_{\Ocyt}d\mu}\int_{\Ocyt}fd\mu$ to estimate, using the triangle inequality,
	\begin{equation*} 
		\begin{aligned}
			&\D(\uu - \uinf|\uinf) \geq DT_{\mu}\!\!\!\!\!\int\limits_{\partial\Ocyt}\!\!\!|\!v_0 - \overline{v}_0|^2d\sigma_{\ukhong} +DT_{\mu}\!\!\!\!\!\int\limits_{\partial\Ocyt}\!\!\!\!|v_1 - \overline{v}_1|^2d\sigma_{\umot}+ \xi\ubay|v_7 - v_2|^2  \\
			&\ \  + \alpha\beta\! \int_{\partial\Ocyt}\!\!|v_0 - v_1|^2d\sigma_{\ukhong}+ DT_{\mu}\!\int_{\partial\Onuc}\!\!|v_0 - \overline{v}_0|^2dS_{\ukhong} + \gamma\!\int_{\partial\Onuc}\!\!|v_0 - v_2|^2dS_{\ukhong}\\
		    &\ \  + DT_{\mu}\int_{\partial\Onuc}|v_1 - \overline{v}_1|^2dS_{\umot} + \sigma\int_{\partial\Onuc}|v_1 - v_3|^2dS_{\umot} + \sum_{i=3}^{6}\xi u_{i,\infty}\left[v_i - v_{i+1}\right]^2\\
		    &\geq C_1|\overline{v}_0 - \overline{v}_1|^2\int_{\partial\Ocyt}d\sigma_{\ukhong} + C_2|\overline{v}_0 - v_2|^2\int_{\partial\Onuc}dS_{\ukhong} + C_3|\overline{v}_1 - v_3|^2\int_{\partial\Onuc}dS_{\umot}\\
		    &\ \ + \sum_{i=3}^{6}\xi u_{i,\infty}|v_i - v_{i+1}|^2
		    \geq 2L_2\,\D(\overline{\uu} - \uinf|\uinf)
		\end{aligned}
	\end{equation*}	
where $T_{\mu}=\min\{T_{\sigma_{\ukhong}}, T_{\sigma_{\umot}}, T_{S_{\ukhong}}, T_{S_{\umot}}\}$,
	$		C_1 = \frac{DT_{\mu}}{3}\min\Bigl\{1;\alpha\beta;\min\limits_{y\in\partial\Ocyt}\frac{\umot(y)}{\ukhong(y)}\Bigr\}$, 
	$		C_2 = \frac{\min\{DT_{\mu};\gamma\}}{2}$,  $C_3 = \frac{\min\{DT_{\mu};\sigma\}}{2}$
	and
$L_2 = \frac 12 \min\Bigl\{\frac{C_1}{\alpha\beta}; \frac{C_2}{2\gamma}; \frac{C_2\int_{\partial\Onuc}dS_{\ukhong}}{2\kappa \uhai \int_{\partial\Onuc}dS}; \frac{C_3}{\sigma}; 1\Bigr\}$
	which can be computed explicitly, for instance
	\begin{equation*}
		\begin{aligned}
			&DT_{\mu}\!\int_{\partial\Ocyt}|v_0 - \overline{v}_0|^2d\sigma_{\ukhong} +DT_{\mu}\!\int_{\partial\Ocyt}|v_1 - \overline{v}_1|^2d\sigma_{\umot} + \alpha\beta\! \int_{\partial\Ocyt}|v_0 - v_1|^2d\sigma_{\ukhong}\\
			&\geq \frac{1}{3}DT_{\mu}\min\left\{1;\; \min_{y\in\partial\Ocyt}\frac{\umot(y)}{\ukhong(y)};\;\alpha\beta\right\}\int_{\partial\Ocyt}(v_0 - \overline{v}_0 + \overline{v}_1 - v_1 - v_0 + v_1)^2d\sigma_{\ukhong}\\
			&= C_1|\overline{v}_0 - \overline{v}_1|^2\int_{\partial\Ocyt}d\sigma_{\ukhong}.
		\end{aligned}
	\end{equation*}
\end{proof}

\vskip 0.5cm
\noindent{\bf Acknowledgements.} The second author was supported by International Research Training Group IGDK 1754. This work has partially been supported by NAWI Graz.

\end{document}